\newtheorem{theorem}{Theorem}[section]
\newtheorem{lemma}[theorem]{Lemma}
\newtheorem{defi}[theorem]{Definition}
\newtheorem{prop}[theorem]{Proposition}
\newtheorem{cor}[theorem]{Corollary}
\DeclareMathOperator{\im}{im}
\DeclareMathOperator{\sfl}{sf}
\DeclareMathOperator{\sgn}{sgn}
\DeclareMathOperator{\diverg}{div}
\DeclareMathOperator{\re}{Re}
\title{On bifurcation for semilinear elliptic Dirichlet problems on shrinking domains}
\author{Nils Waterstraat}
\begin{document}
\date{}
\maketitle

\footnotetext[1]{{\bf 2010 Mathematics Subject Classification: Primary 35B32; Secondary 47A53, 35J25, 58E07 }}
\footnotetext[2]{Keywords: Variational bifurcation; crossing forms; semilinear elliptic PDE; Morse index theorem}
\footnotetext[3]{N. Waterstraat was supported by a postdoctoral fellowship of
the German Academic Exchange Service (DAAD) and by the program "Professori Visitatori Junior" of GNAMPA-INdAM.}

\begin{abstract}
We consider the Dirichlet problem for semilinear elliptic equations on a bounded domain which is diffeomorphic to a ball and investigate bifurcation from a given (trivial) branch of solutions, where the radius of the ball serves as bifurcation parameter. Our methods are based on well known results from variational bifurcation theory, which we outline in a separate section for the readers' convenience. 
\end{abstract}

\section{Introduction}
Let $\Omega\subset\mathbb{R}^n$ be a bounded domain and

\[\Phi:\overline{\Omega}\rightarrow B[0,1]\]
a diffeomorphism, where $B[0,1]$ denotes the closed unit ball around $0\in\mathbb{R}^n$. We consider the Dirichlet boundary value problem 

\begin{equation}\label{bvp}
\left\{
\begin{aligned}
Lu(x)+g(x,u(x))&=0,\quad x\in\Omega\\
u(x)&=0,\quad x\in\partial\Omega,
\end{aligned}
\right.
\end{equation}
where

\[Lu(x)=-\sum^n_{i,j=1}{\frac{\partial}{\partial x_j}\left(a_{ij}(x)\frac{\partial u}{\partial x_i}(x)\right)},\quad x\in\Omega,\]
for some smooth functions $a_{ij}:\overline{\Omega}\rightarrow\mathbb{R}$, $a_{ij}=a_{ji}$, $1\leq i,j\leq n$, which satisfy the ellipticity condition

\begin{align}\label{ellipticity}
\sum^n_{i,j=1}{a_{ij}(x)\xi_i\xi_j>0,\quad x\in\Omega,\,\,(\xi_1,\ldots,\xi_n)\in\mathbb{R}^n\setminus\{0\}}.
\end{align}
Moreover, we assume that $g:\overline{\Omega}\times\mathbb{R}\rightarrow\mathbb{R}$ is continuously differentiable, $g(x,0)=0$, $x\in\Omega$, and we suppose that there are constants $\alpha, C$ such that 

\begin{align}\label{growth}
\left|\frac{\partial g}{\partial\xi}(x,\xi)\right|\leq C(1+|\xi|^{\alpha-1}),\quad (x,\xi)\in\Omega\times\mathbb{R}, 
\end{align}
where $1\leq\alpha\leq\frac{n+2}{n-2}$ if $n\geq 3$ and $1\leq\alpha<\infty$ if $n=2$. Finally, in the case $n=1$, that is, \eqref{bvp} is an ordinary differential equation, we do not impose a growth condition on the nonlinearity $g$.\\
Let us denote for $0<r\leq 1$ by $B(0,r)$ the open ball of radius $r$ around $0$ in $\mathbb{R}^n$ and by 

\begin{align}\label{radiusball}
\Omega_r:=\Phi^{-1}(B(0,r))
\end{align}
the domain $\Omega$ shrinked by the factor $r$. We consider the corresponding boundary value problems

\begin{equation}\label{bvpresc}
\left\{
\begin{aligned}
Lu(x)+g(x,u(x))&=0,\quad x\in\Omega_r\\
u(x)&=0,\quad x\in\partial\Omega_r.
\end{aligned}
\right.
\end{equation}
Note that $u\equiv 0$ is a solution of \eqref{bvpresc} for all instants $r$. We call $r_0\in(0,1]$ a \textit{bifurcation instant} for the boundary value problems \eqref{bvpresc} if 
there exists a sequence of radii $r_n\rightarrow r_0$ and functions $u_n\in H^1_0(\Omega_{r_n})$ such that $u_n$ is a non-trivial weak solution of \eqref{bvpresc} on $\Omega_{r_n}$ and $\|u_n\|_{H^1_0(\Omega_{r_n})}\rightarrow 0$.\\ Let us point out that we exclude from the definition the limiting case $r_0=0$ in which the domain $\Omega_r$ degenerates to a point. The reason is that if $r_n\rightarrow 0$, then $\|u_n\|_{H^1_0(\Omega_{r_n})}\rightarrow 0$, for example, for any sequence of functions $\{u_n\}_{n\in\mathbb{N}}\subset H^1_0(\Omega_{r_n})$ such that $\|\frac{\partial u_n}{\partial x_i}\|_{L^\infty(\Omega_r)}<\infty$, $n\in\mathbb{N}$, $i=1,\ldots, n$. Consequently, a bifurcation instant $r_0=0$ would not imply the existence of non-trivial solutions of \eqref{bvpresc} for small $r>0$ which are arbitrarily close to the trivial solution $u\equiv 0$ in a suitable sense.\\
In what follows, we suppose that the function $f:\Omega\rightarrow\mathbb{R}$ defined by $f(x)=\frac{\partial g}{\partial \xi}(x,0)$ is smooth, and we consider the linearised boundary value problems

\begin{equation}\label{bvpII}
\left\{
\begin{aligned}
Lu(x)+f(x)u(x)&=0,\quad x\in\Omega_r\\
u(x)&=0,\quad x\in\partial\Omega_r.
\end{aligned}
\right.
\end{equation}
We call $r_0\in(0,1]$ a \textit{conjugate instant} for \eqref{bvpII} if the dimension of the space of classical solutions

\begin{align}\label{multiplicity}
m(r_0):=\dim\{u\in C^2(\Omega_r)\cap C(\overline{\Omega_r}):\, u\,\,\,\text{solves}\,\,\,\eqref{bvpII}\}
\end{align}
is non-zero, and from now on we assume that $m(1)=0$.  Our main result reads as follows:

\begin{theorem}\label{theorem}
The bifurcation instants of \eqref{bvpresc} are precisely the conjugate instants of \eqref{bvpII}. 
\end{theorem}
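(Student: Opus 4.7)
The natural strategy is to convert the $r$-dependent family of boundary value problems \eqref{bvpresc} into a family of variational problems on a single Hilbert space, so that Theorem~\ref{theorem} becomes a statement about bifurcation of critical points. For each $r\in(0,1]$, let $\phi_r:\overline{\Omega}\to\overline{\Omega_r}$ be the diffeomorphism obtained by composing $\Phi$, the dilation $x\mapsto rx$, and $\Phi^{-1}$. Pulling back along $\phi_r$ gives an isomorphism $H^1_0(\Omega_r)\cong H^1_0(\Omega)=:H$ and transforms the energy
\[
J_r(u)=\tfrac12\int_{\Omega_r}\sum_{i,j}a_{ij}(x)\partial_iu\,\partial_ju\,dx+\int_{\Omega_r}G(x,u)\,dx,\qquad G(x,\xi):=\int_0^\xi g(x,t)\,dt,
\]
into a functional $\psi_r:H\to\mathbb{R}$ of class $C^2$ whose critical points are in bijection with weak solutions of \eqref{bvpresc}; the subcritical growth \eqref{growth} ensures the required regularity via Sobolev embedding. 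The origin $0\in H$ is a critical point of $\psi_r$ for every $r$, and the Hessian $L_r:=D^2\psi_r(0)\in\mathcal{L}(H)$ has the form $\mathrm{id}+K_r$ with $K_r$ compact, hence is self-adjoint Fredholm of index zero. Elliptic regularity identifies $\ker L_r$ with the space of classical solutions of \eqref{bvpII} on $\Omega_r$, so $\dim\ker L_r=m(r)$.

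\textbf{Necessary direction.} If $r_0$ is \emph{not} a conjugate instant, then $L_{r_0}$ is a linear homeomorphism, and the implicit function theorem applied to $\nabla\psi_r$ near $(r_0,0)\in(0,1]\times H$ shows that the trivial solution is locally the unique critical branch. Hence $r_0$ cannot be a bifurcation instant.

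\textbf{Sufficient direction.} For the converse, suppose $m(r_0)>0$. By the abstract variational bifurcation results to be recalled in the next section, it suffices to show that the path $r\mapsto L_r$ has a regular crossing at $r_0$ with non-degenerate crossing form. I propose to compute this form by a domain-variation argument: differentiating $\langle L_ru,u\rangle_H$ in $r$ at $r_0$ and transporting everything back to $\Omega_{r_0}$ produces a Hadamard-type boundary expression
\[
\frac{d}{dr}\Big|_{r=r_0}\langle L_ru,u\rangle_H=-\int_{\partial\Omega_{r_0}}\Big(\sum_{i,j}a_{ij}\nu_i\nu_j\Big)\,(\partial_\nu u_\ast)^2\,(X\cdot\nu)\,d\sigma,
\]
where $u_\ast$ is the classical solution of \eqref{bvpII} on $\Omega_{r_0}$ corresponding to $u\in\ker L_{r_0}$, $\nu$ is the outer unit normal to $\partial\Omega_{r_0}$, and $X$ is the deformation field of $r\mapsto\phi_r$ at $r=r_0$. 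Because the family $\{\Omega_r\}$ is strictly expanding in $r$ one has $X\cdot\nu>0$ on $\partial\Omega_{r_0}$; together with ellipticity $\sum a_{ij}\nu_i\nu_j>0$ and Hopf's boundary point lemma, which forbids $\partial_\nu u_\ast\equiv 0$ when $u_\ast\not\equiv 0$, this forces the crossing form to be sign-definite and non-degenerate on $\ker L_{r_0}$. Consequently the Morse index of $L_r$ jumps by $m(r_0)$ as $r$ crosses $r_0$, and bifurcation follows.

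\textbf{Main obstacle.} The central difficulty is the crossing-form computation just outlined: one has to differentiate the $r$-dependent operator $L_r$ in the norm of $\mathcal{L}(H)$ and relate its derivative, restricted to $\ker L_{r_0}$, to the boundary integral above. This requires a pullback $\phi_r$ chosen so as to preserve the self-adjoint structure of the family, a careful Reynolds-transport calculation inside $\Omega$, and an elliptic regularity step that upgrades kernel elements to $C^2$ solutions before Hopf's lemma can be invoked. Once this Hadamard-type formula is established, the combination of strict monotonicity of $\{\Omega_r\}$ in $r$, ellipticity of $(a_{ij})$, and the boundary point lemma yields the required sign-definiteness essentially automatically, so that the whole argument reduces to a clean application of the variational bifurcation theorem of the next section.
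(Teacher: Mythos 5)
Your overall strategy coincides with the one used in the paper: rescale to a fixed Sobolev space, identify $\ker L_r$ with the solution space of \eqref{bvpII} via elliptic regularity, dispose of the necessary direction by the implicit function theorem, and prove the sufficient direction by showing that every crossing of $r\mapsto L_r$ is regular with sign-definite crossing form, the latter being expressed as a Hadamard-type boundary integral. (The paper rescales to the unit ball $B(0,1)$ rather than to $\Omega$, which makes the deformation field radial and renders the factor you call $X\cdot\nu$ explicit; on $\partial B$ one has $\nu(x)=x$, the tangential gradient of $u$ vanishes, and your boundary integrand reduces exactly to the paper's $\langle\nabla u(x),x\rangle^2\langle A(r_0\cdot x)x,x\rangle$. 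This is a cosmetic difference.)

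There is, however, one genuine gap: the non-degeneracy of the crossing form cannot be obtained from Hopf's boundary point lemma. What has to be excluded is that a non-trivial $u\in\ker L_{r_0}$, which solves the linearised equation classically and vanishes on $\partial\Omega_{r_0}$, also has identically vanishing normal derivative on $\partial\Omega_{r_0}$. Hopf's lemma applies at a boundary point $x_0$ only when $u$ has a fixed sign on an interior ball tangent at $x_0$ (together with $u(x_0)=0$ or a sign condition on the zeroth-order coefficient $f=\partial g/\partial\xi(\cdot,0)$, which here is arbitrary). For a sign-changing kernel element nothing guarantees such a configuration; a priori $u$ could even vanish on a whole collar of the boundary, a scenario against which Hopf's lemma is powerless. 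The correct tool is uniqueness of the Cauchy problem (unique continuation from the boundary) for second-order elliptic operators, due to Calder\'on and H\"ormander \cite{Calderon}, \cite{Hormander}: a solution with vanishing Cauchy data $u=\partial u/\partial\nu=0$ on the boundary must vanish identically. Since the strict negativity of the crossing form is precisely what makes every crossing regular --- and hence drives both the bifurcation statement and the Morse-index count \eqref{Smale} --- this substitution is essential rather than cosmetic. The rest of your outline (the transport/Hadamard computation, the compact-perturbation Fredholm argument, and the reduction to the abstract crossing-form bifurcation theorem) matches the paper's proof.
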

Our proof of Theorem \ref{theorem} uses crossing forms from variational bifurcation theory as follows: Let $H$ be a real separable Hilbert space and $\psi:I\times H\rightarrow\mathbb{R}$ a $C^2$-function, where $I=[0,1]$ denotes the unit interval. We assume that $0\in H$ is a critical point of all functionals $\psi_\lambda:=\psi(\lambda,\cdot)$, $\lambda\in I$, and that the Riesz representations $L_\lambda$ of the second derivatives $D^2_0\psi_\lambda:H\times H\rightarrow \mathbb{R}$ at this critical point are Fredholm operators and have finite Morse indices $\mu_-(L_\lambda)$. It is well known that, if $L_a$ and $L_b$ are invertible for some $0\leq a<b\leq 1$ and $\mu_-(L_a)\neq\mu_-(L_b)$, then there exists a bifurcation instant $\lambda_0\in(a,b)$ for critical points of $\psi$, that is, every neighbourhood of $(\lambda_0,0)$ in $I\times H$ contains elements $(\lambda,u)$, where $u\neq 0$ is a critical point of $\psi_\lambda$. In other words, a jump in the Morse index along the path $L=\{L_\lambda\}$ entails bifurcation of critical points of $\psi$. Besides this result, we recall below in Section 2 that, if $L$ is continuously differentiable with respect to the parameter $\lambda\in I$, then jumps in the Morse index can be computed as follows: assume that $L_{\lambda_0}$ has a non-trivial kernel for some $\lambda_0\in (0,1)$. The derivative $\dot L_{\lambda_0}$ of $L$ at $\lambda_0$ plugged into the scalar product on $H$ defines a quadratic form on $H$ and $\lambda_0$ is called regular if its restriction to $\ker L_{\lambda_0}$ is non-degenerate. We recall from \cite{SFLPejsachowicz} that if $\lambda_0$ is a regular crossing of $L$, then there exists $\varepsilon>0$ such that $L_\lambda$ is invertible for all $0<|\lambda-\lambda_0|<\varepsilon$, and the jump in the Morse index when $\lambda$ passes $\lambda_0$ is given by the signature of the quadratic form $\langle\dot L_{\lambda_0}\cdot,\cdot\rangle$ on $\ker L_{\lambda_0}$. This result traces back to the seminal paper \cite{Robbin} of Robbin and Salamon.\\  
The third section is devoted to the proof of Theorem \ref{theorem}. We introduce a family of functionals $\psi:I\times H^1_0(B(0,1))\rightarrow\mathbb{R}$ parametrised by the radius $r$ of the ball in \eqref{radiusball}, such that critical points of $\psi_r:=\psi(r,\cdot)$ correspond to weak solutions of \eqref{bvpresc} under an obvious rescaling mapping from $H^1_0(B(0,1))$ to $H^1_0(\Omega_r)$. Since $u\equiv 0$ is a solution of all equations \eqref{bvpresc}, $0\in H^1_0(B(0,1))$ is a critical point of all functionals $\psi_r$, $r\in I$, and $r_0\in (0,1]$ is a bifurcation instant for $\psi$ if and only if it is a bifurcation instant for the equations \eqref{bvpresc}. Moreover, we explain below that the kernels of the associated operators $L_r$ induced by the second derivative of $\psi_r$ at the critical point $0$ correspond to solutions of the linearised boundary value problems \eqref{bvpII}. The main part of our proof shows that each crossing $r_0\in(0,1)$ of this path $L=\{L_r\}$ is regular and that the signature of the restriction of $\langle\dot L_{r_0}\cdot,\cdot\rangle$ to $\ker L_{r_0}$ is given by the dimension $m(r_0)$ introduced in \eqref{multiplicity}. Consequently, if $m(r_0)\neq 0$, then $r_0$ is a bifurcation instant for the equations \eqref{bvpresc}. Since every bifurcation instant is easily seen to be conjugate by the implicit function theorem, this will prove Theorem \ref{theorem}.\\
In the final Section 4, we show at first a corollary that we obtain from our proof of Theorem \ref{theorem} as outlined in the previous paragraph. Since each crossing $r_0\in(0,1)$ of $L$ is regular, we deduce that there exist only finitely many instants at which $\ker L_r\neq\{0\}$, and hence $m(r)=\dim\ker L_r=0$ for all but finitely many $r\in(0,1)$. Let $\mu_-$ denote the Morse index of the linearised equation \eqref{bvpII} on $\Omega=\Omega_1$, i.e. the number of negative eigenvalues

\begin{equation}\label{bvpIII}
\left\{
\begin{aligned}
Lu(x)+f(x)u(x)&=\lambda\,u(x),\quad x\in\Omega\\
u(x)&=0,\quad x\in\partial\Omega,
\end{aligned}
\right.
\end{equation}
counted according to their multiplicities. We derive from our proof of Theorem \ref{theorem} that

\begin{align}\label{Smale}
\mu_-=\sum_{0<r<1}{m(r)}.
\end{align} 
Let us point out that this result was already obtained by Smale in \cite{Smale} (cf. also \cite{SmaleCorr}) for general strongly elliptic differential operators on vector bundles over compact manifolds with boundary, which is a generalisation of the Morse index theorem for geodesics in Riemannian manifolds to partial differential equations and motivates the definition of conjugate instants for \eqref{bvpII}. However, Smale's argument is based on the domain monotonicity of eigenvalues and consequently it is rather different from our way to equality \eqref{Smale}.\\
We derive from \eqref{Smale} the following corollary of Theorem \ref{theorem}.

\begin{cor}\label{cor}
 If $\mu_-\neq 0$, then there exist at least
	
	\begin{align*}
	\left\lfloor\frac{\mu_-}{\max_{0<r<1}m(r)}\right\rfloor
	\end{align*}
distinct bifurcation instants in $(0,1)$, where $\lfloor\cdot\rfloor$ denotes the integral part of a real number.
\end{cor}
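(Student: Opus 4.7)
The corollary follows quickly once equation \eqref{Smale} is in hand, so my plan is essentially bookkeeping on that identity.

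First, I would invoke the observation made immediately before the statement of the corollary: since every crossing $r_0\in(0,1)$ of the path $L=\{L_r\}$ is regular, the crossings are isolated, and so $m(r)=\dim\ker L_r\neq 0$ for only finitely many values $r\in(0,1)$. Let these values be $r_1,\dots,r_N\in(0,1)$, so that
\[
\mu_-=\sum_{0<r<1} m(r)=\sum_{i=1}^{N} m(r_i)
\]
by \eqref{Smale}.

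Next, setting $M:=\max_{0<r<1} m(r)$, which is a positive integer whenever $\mu_-\neq 0$ (indeed, $\mu_-\neq 0$ forces at least one $m(r_i)\neq 0$, so $M\geq 1$ and the maximum is actually attained on the finite set $\{r_1,\dots,r_N\}$), I would estimate each summand by $M$ to get
\[
\mu_- \;=\; \sum_{i=1}^{N} m(r_i) \;\leq\; N\cdot M.
\]
Since $N$ is a non-negative integer, this yields $N\geq \mu_-/M$, and therefore $N\geq \lfloor \mu_-/M\rfloor$.

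Finally, by Theorem \ref{theorem} every conjugate instant $r_i$ is a bifurcation instant of \eqref{bvpresc}, so the $N$ distinct points $r_1,\dots,r_N\in(0,1)$ are $N$ distinct bifurcation instants, establishing the bound. There is no real obstacle here; the entire content of the corollary is the combinatorial pigeonhole applied to the Smale-type identity \eqref{Smale}, which in turn is the delicate statement that has already been extracted from the proof of Theorem \ref{theorem}.
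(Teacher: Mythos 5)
Your proof is correct and takes essentially the same route as the paper: the paper also treats Corollary \ref{cor} as an immediate consequence of the identity \eqref{Smale} together with Theorem \ref{theorem}, the only remaining content being exactly the pigeonhole estimate $\mu_-\leq N\cdot\max_{0<r<1}m(r)$ that you spell out.
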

We conclude the fourth section by some examples of our theory including ordinary differential equations and a well known equation from geometric analysis.\\
Finally, let us point out that Theorem \ref{theorem} and Corollary \ref{cor} have appeared in various generality in our joint articles \cite{AleIchDomain} and \cite{AleIchBall} with A. Portaluri from Universit\`a degli studi di Torino. In \cite{AleIchDomain} we assume that $L$ is the Laplacian on a star-shaped domain in $\mathbb{R}^n$, whereas in \cite{AleIchBall} we consider the Laplace-Beltrami operator on a geodesic ball of a Riemannian manifold. Of course, after introducing coordinates in the latter case, both results are special cases of the slightly more general setting that we consider here. A further aim of this article is to give a thorough exposition on crossing forms in bifurcation theory as outlined above, which should make the presentation self-contained in contrast to our previous works \cite{AleIchDomain} and \cite{AleIchBall}, which strongly rely on the article \cite{SFLPejsachowicz} of Fitzpatrick, Pejsachowicz and Recht.

%%%%%%%%%%%%%%%%%%%%%%%%%%%%%%%%%%%%%%%%%%%%%%%%%%%%%%%%%%%%%%%%%%%%%%%%%%%%%%%%%%%%%%%%%%%%%%%%%%%%%%%%%%%%%%%%%%%%%%%%%%%%%%%%%%%%%%%%%%%%%%%%%%%%%%%%%%%%%%%%%%%%%%%%%%%%%%%%%%%%%%%%%%%%%%%%%%%%%%%%%%%%%%%%%%%%%%%%%%%%%%%%%%%%%%%%%%%%%%%%
%%%%%%%%%%%%%%%%%%%%%%%%%%%%%%%%%%%%%%%%%%%%%%%%%%%%%%%%%%%%%%%%%%%%%%%%%%%%%%%%%%%%%%%%%%%%%%%%%%%%%%%%%%%%%%%%%%%%%%%%%%%%%%%%%%%%%%%%%%%%%%%%%%%%%%%%%%%%%%%%%%%%%%%%%%%%%%%%%%%%%%%%%%%%%%%%%%%%%%%%%%%%%%%%%%%%%%%%%%%%%%%%%%%%%%%%%%%%%%%%%%%%%%%%%%%%%%%%%%%%%%%%%%%%%%%%%%%%%%%%%%%%%%%%%%%%%%%%%%%%%%%%%%%%%%%%%%%%%%%%%%%%%%%%%%%%%%%%%%%%%%%%%%%%%%%%%%%%%%%%%%

\section{Bifurcation of critical points for essentially positive functionals}
The aim of this section is to explain the necessary basics from variational bifurcation theory that we need in order to prove Theorem \ref{theorem} in the following section. Before we come to bifurcation theory, we recap in a first subsection some facts from spectral theory for selfadjoint Fredholm operators on real Hilbert spaces. Subsequently, we recall the well known principle from nonlinear functional analysis that jumps in the Morse index of the Hessians entail bifurcation of critical points of families of functionals (cf. Theorem \ref{bifclas}). Since this result can be found in common textbooks, we do not prove it here, but just quote some references. The final part of this section is concerned with crossing forms and their relation to jumps of the Morse index along paths of selfadjoint Fredholm operators. The main result is stated in Proposition \ref{prop}, which finally leads to the bifurcation theorem \ref{bifcross} on which the proof of our main theorem \ref{theorem} relies. Since the only reference for Proposition \ref{prop} that we are aware of is the more general Theorem 4.1 in \cite{SFLPejsachowicz}, we provide full details in this section and in particular include a proof of Lemma \ref{lemma-isolated}, which was left to the reader in \cite{SFLPejsachowicz}.

%%%%%%%%%%%%%%%%%%%%%%%%%%%%%%%%%%%%%%%%%%%%%%%%%%%%%%%%%%%%%%%%%%%%%%%%%%%%%%%%%%%%%%%%%%%%%%%%%%%%%%%%%%%%%%%%%%%%%%%%%%%%%%%%%%%%%%%%%%%%%%%%%%%%%%%%%%%%%%%%%%%%%%%%%%%%%%%%%%%%%%%%%%%%%%%%%%%%%%%%%%%%%%%%%%%%%%%%%%%%%%%%%%%%%%%%%%%%%%%%%%%%%%%%%%%%%%%%%%%%%%%%%%%%%%%%%%%%%%%%%%%%%%%%%%%%%%%%%%%%%%%%%%%%%%%%%%%%%%%%%%%%%%%%%%%%%%%%%%%%%%%%%%%%%%%%%%%%%%%%%%

\subsection{Selfadjoint Fredholm operators and their spectra}\label{section-specproj}
Let $H$ be a real Hilbert space. We denote by $\mathcal{L}(H)$ the space of bounded linear operators, by $GL(H)\subset\mathcal{L}(H)$ the open subset of invertible operators, and by $\mathcal{S}(H)\subset\mathcal{L}(H)$ the closed subset of selfadjoint operators. For $T\in\mathcal{S}(H)$, the spectrum 

\[\sigma(T)=\{\lambda\in\mathbb{R}:\,\lambda-T\notin GL(H)\}\]
is non-empty, and our first aim is to introduce spectral projections for $T$. Before we do this, we briefly discuss complexifications of real Hilbert spaces and their operators.\\
The complexification $H^\mathbb{C}$ of $H$ is the linear space consisting of all formal elements $u+iv$, where $u,v\in H$. $H^\mathbb{C}$ is a Hilbert space with respect to the scalar product 

\[\langle u_1+iv_1,u_2+iv_2\rangle_{H^\mathbb{C}}:=\langle u_1,u_2\rangle_H+\langle v_1,v_2\rangle_H+i\langle v_1,u_2\rangle_H-i\langle u_1,v_2\rangle_H.\]  
Each operator $T\in\mathcal{L}(H)$ induces canonically a bounded linear operator on $H^\mathbb{C}$ by $T^\mathbb{C}(u+iv)=Tu+iTv$, and conversely, if $A$ is a bounded linear operator on $H^\mathbb{C}$, it is readily seen that there are unique operators $T,S$ in $\mathcal{L}(H)$ such that $A=T^\mathbb{C}+iS^\mathbb{C}$. As a consequence, there exists a conjugation $\overline{\cdot}$ on the Banach space $\mathcal{L}(H^\mathbb{C})$ of all bounded linear operators on $H^\mathbb{C}$, and $A=T^\mathbb{C}$ for some $T\in\mathcal{L}(H)$ if and only if $\overline A=A$. This particularly holds for the real part $\re(A):=\frac{1}{2}(A+\overline A)$ of an operator $A\in\mathcal{L}(H^\mathbb{C})$.\\
Let now $T\in\mathcal{S}(H)$, and $a,b\notin\sigma(T)$ such that $[a,b]\cap\sigma(T)$ is a non-empty finite set consisting solely of eigenvalues of finite type; i.e., $0<\dim\ker(\lambda-T)<\infty$ for all $\lambda\in[a,b]\cap\sigma(T)$. Let $D\subset\mathbb{C}$ be a disc such that $\partial D\cap\mathbb{R}=\{a,b\}$. It is readily seen that $\sigma(T)=\sigma(T^\mathbb{C})$ and that $(\ker(\lambda-T))^\mathbb{C}=\ker(\lambda-T^\mathbb{C})$. In particular, $\sigma(T^\mathbb{C})\cap D$ consists only of eigenvalues of finite type and $\partial D\cap\sigma(T^\mathbb{C})=\emptyset$. Let us recall that the spectral projection of $T^\mathbb{C}$ with respect to $D\cap\sigma(T^\mathbb{C})$ is the orthogonal projection 

\begin{align}\label{specproj}
P(T^\mathbb{C})=\frac{1}{2\pi i}\int_{\partial D}{(\lambda-T^\mathbb{C})^{-1}\,d\lambda},
\end{align} 
which projects onto the direct sum of all eigenspaces $\ker(\lambda-T^\mathbb{C})$ of $T^\mathbb{C}$ for eigenvalues $\lambda\in D$. Using once again that $\sigma(T)=\sigma(T^\mathbb{C})$ and $\ker(\lambda-T^\mathbb{C})=(\ker(\lambda- T))^\mathbb{C}$, it follows that $P_{[a,b]}(T):=\re(P(T^\mathbb{C}))$ is the orthogonal projection in $H$ onto the direct sum of the eigenspaces of $T$ with respect to the eigenvalues in $[a,b]$. We deduce from the construction of $P_{[a,b]}(T)$ and the corresponding result for linear operators on complex Hilbert spaces (cf. eg. \cite[Thm. II.4.2]{Gohberg}) the following continuity property for isolated eigenvalues of finite multiplicity.
  
\begin{lemma}\label{stabspec}
Let $T\in\mathcal{S}(H)$ and $a,b\in\mathbb{R}$ such that $a,b\notin\sigma(T)$. Assume that $(a,b)\cap\sigma(T)$ consists of isolated eigenvalues of finite multiplicity. Then there exists $\varepsilon>0$ such that $a,b\notin\sigma(S)$ and

\[\dim\im P_{[a,b]}(S)=\dim\im P_{[a,b]}(T)\]
for all $S\in\mathcal{S}(H)$ such that $\|T-S\|_{\mathcal{L}(H)}<\varepsilon$.
\end{lemma}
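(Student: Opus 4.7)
The strategy is to reduce to the corresponding stability theorem for spectral projections on \emph{complex} Hilbert spaces stated as \cite[Thm.~II.4.2]{Gohberg} and to transfer the conclusion back to $H$ via the complexification formalism set up in the preceding paragraph.

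First I would fix a disc $D\subset\mathbb{C}$ as in the construction of $P_{[a,b]}(T)$, so that $\partial D\cap\mathbb{R}=\{a,b\}$. Selfadjointness gives $\sigma(T^\mathbb{C})=\sigma(T)\subset\mathbb{R}$, hence the hypothesis on $T$ becomes: $D\cap\sigma(T^\mathbb{C})$ consists of finitely many eigenvalues of finite type and $\partial D\cap\sigma(T^\mathbb{C})=\emptyset$. The complex stability theorem then supplies $\varepsilon>0$ such that every $A\in\mathcal{L}(H^\mathbb{C})$ with $\|A-T^\mathbb{C}\|_{\mathcal{L}(H^\mathbb{C})}<\varepsilon$ satisfies $\partial D\cap\sigma(A)=\emptyset$, and the associated Riesz projection $\frac{1}{2\pi i}\int_{\partial D}(\lambda-A)^{-1}\,d\lambda$ has the same complex rank as $P(T^\mathbb{C})$. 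The elementary identity $\|S^\mathbb{C}-T^\mathbb{C}\|_{\mathcal{L}(H^\mathbb{C})}=\|S-T\|_{\mathcal{L}(H)}$, verified directly from the definition of the scalar product on $H^\mathbb{C}$, allows me to apply this with $A=S^\mathbb{C}$ whenever $\|S-T\|_{\mathcal{L}(H)}<\varepsilon$; hence $a,b\notin\sigma(S)=\sigma(S^\mathbb{C})$ and $\dim_{\mathbb{C}}\im P(S^\mathbb{C})=\dim_{\mathbb{C}}\im P(T^\mathbb{C})$.

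It then remains to pass from the complex to the real rank. By the construction recalled above, for $R\in\{S,T\}$ the operator $P_{[a,b]}(R)=\re P(R^\mathbb{C})$ is the orthogonal projection of $H$ onto the direct sum $V_R$ of the eigenspaces of $R$ for eigenvalues in $[a,b]$, while $\im P(R^\mathbb{C})$ is the corresponding sum of complex eigenspaces $\ker(\lambda-R^\mathbb{C})=(\ker(\lambda-R))^\mathbb{C}$, which coincides with the complexification $V_R^\mathbb{C}$. Since $\dim_{\mathbb{C}}V_R^\mathbb{C}=\dim_\mathbb{R}V_R$, the equality of complex ranks from the previous step yields $\dim\im P_{[a,b]}(S)=\dim\im P_{[a,b]}(T)$. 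The only step requiring genuine attention is this last bookkeeping between the real projection obtained as $\re P(R^\mathbb{C})$ and the complex eigenspace decomposition; everything else is a direct transcription of the known complex stability result.
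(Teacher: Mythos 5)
Your argument is exactly the one the paper intends: it states the lemma as a consequence of the construction of $P_{[a,b]}(T)$ together with the complex stability theorem \cite[Thm.~II.4.2]{Gohberg}, and your proposal simply fills in the details of that reduction (norm preservation under complexification, and the identification $\im P(R^\mathbb{C})=V_R^\mathbb{C}$ with $\dim_\mathbb{C}V_R^\mathbb{C}=\dim_\mathbb{R}V_R$). The bookkeeping is correct, so this is a faithful, slightly more detailed version of the paper's own (unwritten) proof.
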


In what follows, we denote by $\mathcal{FS}(H)$ the set of all elements in $\mathcal{S}(H)$ which are Fredholm, and we note that an operator in $\mathcal{S}(H)$ belongs to $\mathcal{FS}(H)$, if and only if its kernel is of finite dimension and its image is closed.  

\begin{lemma}\label{0isolated}
If $T\in\mathcal{FS}(H)$, then either $0$ belongs to the resolvent set of $T$ or it is an isolated eigenvalue of finite multiplicity.
\end{lemma}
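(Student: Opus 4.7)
The plan is to exploit the selfadjoint Fredholm structure to orthogonally decompose $H$ and reduce the problem to invertibility of a restriction.

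First I would observe that since $T\in\mathcal{FS}(H)$ is selfadjoint with closed range, we have the orthogonal decomposition
\[
H = \ker T \oplus (\ker T)^{\perp} = \ker T \oplus \operatorname{im} T,
\]
and $T$ leaves both summands invariant: $T$ annihilates $\ker T$ and restricts to a selfadjoint operator $\widetilde{T}:=T|_{\operatorname{im} T}\colon \operatorname{im} T\to\operatorname{im} T$ which is, by construction, injective and surjective. Since $\operatorname{im} T$ is closed (Fredholmness) it is itself a Hilbert space, so the open mapping theorem gives $\widetilde{T}^{-1}\in\mathcal{L}(\operatorname{im} T)$.

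If $\ker T=\{0\}$, then $T=\widetilde{T}$ is invertible and $0$ lies in the resolvent set, which is the first alternative. Otherwise $\ker T\neq\{0\}$ and I need to see that $0$ is isolated in $\sigma(T)$. Relative to the decomposition above, for $\lambda\in\mathbb{R}$
\[
\lambda - T \;=\; \lambda\, I_{\ker T} \;\oplus\; (\lambda - \widetilde{T}).
\]
The first block is invertible for any $\lambda\neq 0$, and since $\widetilde{T}^{-1}$ is bounded, the second block is invertible for all $\lambda$ with $|\lambda|<\|\widetilde{T}^{-1}\|^{-1}$ by the Neumann series. Setting $\varepsilon=\|\widetilde{T}^{-1}\|^{-1}$, we conclude that $\lambda-T$ is invertible for $0<|\lambda|<\varepsilon$, i.e.\ $0$ is isolated in $\sigma(T)$. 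Finally, the eigenspace at $0$ coincides with $\ker T$, which is finite dimensional by the Fredholm hypothesis, so $0$ is an eigenvalue of finite multiplicity.

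There is no real obstacle here; the only point that requires a line of justification is that $\operatorname{im} T = (\ker T)^{\perp}$, which follows from selfadjointness together with the fact that $\operatorname{im} T$ is closed (so that $\operatorname{im} T = \overline{\operatorname{im} T} = (\ker T^{\ast})^{\perp} = (\ker T)^{\perp}$). Everything else is a direct consequence of the Neumann series argument applied to the invertible restriction $\widetilde{T}$.
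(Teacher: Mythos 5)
Your proof is correct and follows essentially the same route as the paper: both hinge on the orthogonal decomposition $H=\ker T\oplus\im T$ and the invertibility of the restriction of $T$ to $\im T$. Your block-diagonal formulation with the Neumann series is a slightly cleaner packaging of the paper's componentwise kernel computation (and dispenses with the paper's appeal to openness of the set of Fredholm operators), but the underlying idea is identical.
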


\begin{proof}
Since the set of Fredholm operators is open in $\mathcal{L}(H)$, there exists $\varepsilon>0$ such that $\lambda-T$ is Fredholm for all $|\lambda|<\varepsilon$. Hence, if $|\lambda|<\varepsilon$, then either $\lambda-T$ is invertible or it has a finite dimensional kernel. It remains to show that $0$ is isolated in $\sigma(T)$. Since $T$ is selfadjoint and Fredholm, there is an orthogonal decomposition $H=\ker T\oplus\im T$, and the restriction $T'$ of $T$ to the closed subspace $(\ker T)^\perp=\im T$ is an isomorphism onto $\im T$. Since $GL(\im T)\subset\mathcal{L}(H)$ is open, there exists $\kappa>0$ such that $\sigma(T')\cap(-\kappa,\kappa)=\emptyset$. Let now $0<|\lambda|<\min\{\kappa,\varepsilon\}$ and let $u=u_1+u_2\in\ker T\oplus\im T$ be an element of $\ker(\lambda-T)$. Then $0=\lambda u-Tu=\lambda u_1+\lambda u_2-Tu_2$ and so 

\[\lambda u_1=(T-\lambda)u_2=(T'-\lambda)u_2.\]
Since the left hand side of this equality is in $\ker T$ and the right hand side is in $\im T=(\ker T)^\perp$, we conclude that $\lambda u_1=(T'-\lambda)u_2=0$. From $\lambda\neq 0$ and the invertibility of $T'-\lambda$, it follows that $u=u_1+u_2=0$.    
\end{proof}

As a final piece of notation, we let $\mathcal{FS}_+(H)$ be the set of all elements $T\in\mathcal{FS}(H)$ such that $\sigma(T)\cap(-\infty,0)$ consists of a finite number of eigenvalues of finite type. The Morse index

\[\mu_-(T)=\dim\left(\bigoplus_{\lambda<0}\ker(\lambda-T)\right)\]
is finite for all $T\in\mathcal{FS}_+(H)$ and we note for later reference the following elementary properties:

\begin{enumerate}
	\item[i)] If $U:H\rightarrow H$ is an orthogonal operator, then $UTU^{-1}\in\mathcal{FS}_+(H)$ and $\mu_-(U TU^{-1})=\mu_-(T)$.
	\item[ii)] If $T$ is reduced by a splitting $H=H_0\oplus H_1$, that is, $T(H_i)\subset H_i$, $i=0,1$, then 
	
	\[\mu_-(T)=\mu_-(T\mid_{H_0})+\mu_-(T\mid_{H_1}).\]
	\item[iii)] If $T_0,T_1$ belong to the same component of $\mathcal{FS}_+(H)\cap GL(H)$, then $\mu_-(T_1)=\mu_-(T_0)$. 
\end{enumerate}
Note that the first two properties follow immediately from the definition, whereas the last one is a consequence of Lemma \ref{stabspec}.\\
Finally, let us mention for the sake of completeness that Atiyah and Singer proved in \cite{AtiyahSinger} that   $\mathcal{FS}(H)$ consists of three components 

\[\mathcal{FS}(H)=\mathcal{FS}_+(H)\cup\mathcal{FS}_\ast(H)\cup\mathcal{FS}_-(H),\]
where an operator $L\in\mathcal{FS}(H)$ belongs to $\mathcal{FS}_{\pm}(H)$ if and only if $\mu_-(\pm L)<\infty$. Moreover, the spaces $\mathcal{FS}_{\pm}(H)$ are contractible, whereas $\mathcal{FS}_\ast(H)$ is a classifying space for the $KO$-theory functor $KO^{-7}$.

%%%%%%%%%%%%%%%%%%%%%%%%%%%%%%%%%%%%%%%%%%%%%%%%%%%%%%%%%%%%%%%%%%%%%%%%%%%%%%%%%%%%%%%%%%%%%%%%%%%%%%%%%%%%%%%%%%%%%%%%%%%%%%%%%%%%%%%%%%%%%%%%%%%%%%%%%%%%%%%%%%%%%%%%%%%%%%%%%%%%%%%%%%%%%%%%%%%%%%%%%%%%%%%%%%%%%%%%%%%%%%%%%%%%%%%%%%%%%%%%%%%%%%%%%%%%%%%%%%%%%%%%%%%%%%%%%%%%%%%%%%%%%%%%%%%%%%%%%%%%%%%%%%%%%%%%%%%%%%%%%%%%%%%%%%%%%%%%%%%%%%%%%%%%%%%%%%%%%%%%%%

\subsection{The bifurcation theorem}

\subsubsection{A classical bifurcation theorem}\label{section-bifurcation}
As in the previous section, let $H$ be a real Hilbert space and recall from the introduction that we denote by $I$ the unit interval $[0,1]$. Let $\psi:I\times H\rightarrow\mathbb{R}$ be a continuous function such that the derivatives $D_u\psi_\lambda$ and $D^2_u\psi_\lambda$ of $\psi_\lambda:=\psi(\lambda,\cdot):H\rightarrow\mathbb{R}$ exist and depend continuously on $(\lambda,u)\in I\times H$. In what follows, we assume that $0\in H$ is a critical point of all functionals $\psi_\lambda$, $\lambda\in I$. 

\begin{defi}\label{defi-bifurcationfunctional}
An instant $\lambda_0\in I$ is called a bifurcation point of critical points of $\psi$ if any neighbourhood of $(\lambda_0,0)\in I\times H$ contains elements $(\lambda,u)$ such that $u\neq 0$ is a critical point of $\psi_\lambda$.
\end{defi} 
The bilinear forms $D^2_0\psi_\lambda:H\times H\rightarrow\mathbb{R}$ given by the second derivative at the critical point $0\in H$ define by the Riesz representation theorem a unique path $L:I\rightarrow\mathcal{S}(H)$ such that

\[D^2_0\psi_\lambda(u,v)=\langle L_\lambda u,v\rangle_H,\quad u,v\in H.\]
The following theorem is a standard result in bifurcation theory.

\begin{theorem}\label{bifclas}
Assume that $L_\lambda\in\mathcal{FS}_+(H)$ for all $\lambda\in I$, so that in particular the Morse index $\mu_-(L_\lambda)$ is finite for all $\lambda\in I$. Let $a,b\in I$, $a<b$, such that $L_a,L_b$ are invertible. If $\mu_-(L_a)\neq\mu_-(L_b)$, then there exists a bifurcation point of critical points in $(a,b)$. 
\end{theorem}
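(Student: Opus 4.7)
The plan is to argue by contradiction and reduce the infinite-dimensional statement to finite-dimensional Morse theory. Assume there is no bifurcation point of $\psi$ in $(a,b)$; the goal is then to derive $\mu_-(L_a)=\mu_-(L_b)$, contradicting the hypothesis.

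First I would establish uniform isolation of the trivial critical point: there exists $\varepsilon>0$ such that for every $\lambda\in[a,b]$ the origin is the only critical point of $\psi_\lambda$ in $\overline B(0,\varepsilon)\subset H$. If this failed, a compactness argument would yield a sequence $(\lambda_n,u_n)$ with $\lambda_n\to\lambda_*\in[a,b]$, $u_n\neq 0$, $u_n\to 0$ and $D_u\psi_{\lambda_n}(u_n)=0$; this would either produce a bifurcation point at $\lambda_*\in(a,b)$, contrary to the standing assumption, or violate the local uniqueness of $0$ as a zero of $\nabla\psi_a$ or $\nabla\psi_b$ guaranteed by the implicit function theorem from the invertibility of $L_a$ and $L_b$.

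Next I would perform a Lyapunov--Schmidt reduction. Fix $\lambda_0\in[a,b]$ and choose $\mu>0$ with $\pm\mu\notin\sigma(L_{\lambda_0})$ so that every negative eigenvalue of $L_{\lambda_0}$ lies in $(-\mu,0)$. By Lemma \ref{stabspec} there is an open neighbourhood $J$ of $\lambda_0$ on which the spectral projection $P_\lambda:=P_{[-\mu,\mu]}(L_\lambda)$ has constant finite rank and depends continuously on $\lambda$. Writing $H=V_\lambda\oplus W_\lambda$ with $V_\lambda:=\im P_\lambda$ finite-dimensional and $L_\lambda$ restricted to $W_\lambda$ bounded below by a positive constant uniform in $\lambda\in J$, the equation $(\mathrm{Id}-P_\lambda)\nabla\psi_\lambda(v+w)=0$ can be solved uniquely for $w=w_\lambda(v)$ near the origin by the implicit function theorem. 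The reduced functional $\varphi_\lambda(v):=\psi_\lambda(v+w_\lambda(v))$ is a $C^2$-family on the finite-dimensional spaces $V_\lambda$; its critical points near $0$ correspond bijectively to those of $\psi_\lambda$, and its Hessian at $0$ has Morse index exactly $\mu_-(L_\lambda)$.

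In finite dimensions, classical Morse theory takes over. The uniform isolation from the first step shows that $0$ remains an isolated critical point of each $\varphi_\lambda$ in a fixed neighbourhood, so the local critical groups $C_q(\varphi_\lambda,0)$ are invariant under the homotopy along $J$. When the Hessian is non-degenerate, these groups are concentrated in degree $\mu_-(L_\lambda)$, so $\mu_-(L_\lambda)$ is constant on $J$ between invertible instants. Covering $[a,b]$ by finitely many such intervals and chaining the invariances produces $\mu_-(L_a)=\mu_-(L_b)$, the desired contradiction. The principal obstacle is the Lyapunov--Schmidt reduction itself: because $L_\lambda$ may become singular at interior points of $[a,b]$, no single global splitting exists, and the compactness of $[a,b]$ together with the spectral stability of Lemma \ref{stabspec} must be used to patch locally defined reductions; once this is done, the remainder of the argument is purely finite-dimensional and standard.
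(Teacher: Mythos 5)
The paper does not actually prove Theorem \ref{bifclas}: it is quoted as a standard result with pointers to \cite[\S 8.9]{Mawhin} (continuity of critical groups) and \cite[\S II.7.1]{Kielhoeffer} (Conley index), so there is no in-paper argument to compare with. Your proposal is a sketch of the first of these routes, and its architecture --- uniform isolation of the trivial branch via the implicit function theorem at the invertible endpoints, local finite-dimensional reduction, homotopy invariance of critical groups --- is the right one.

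There is, however, one step that fails as written. You choose $\mu>0$ with $\pm\mu\notin\sigma(L_{\lambda_0})$ such that \emph{every} negative eigenvalue of $L_{\lambda_0}$ lies in $(-\mu,0)$, and you then need $P_{[-\mu,\mu]}(L_\lambda)$ to have finite rank. For a general element of $\mathcal{FS}_+(H)$ only the spectrum in $(-\infty,0]$, together with a small punctured neighbourhood of $0$ (Lemma \ref{0isolated}), is guaranteed to be discrete; the interval $(0,\mu]$ may meet the essential spectrum. Take $H=\mathbb{R}\oplus\ell^2$ and $L_{\lambda_0}=(-1)\oplus\tfrac12 I$: this operator is invertible, selfadjoint and lies in $\mathcal{FS}_+(H)$ with $\mu_-=1$, but any $\mu$ capturing the eigenvalue $-1$ forces $\tfrac12\in[-\mu,\mu]$, so $P_{[-\mu,\mu]}(L_{\lambda_0})$ has infinite rank, Lemma \ref{stabspec} does not apply, and $V_\lambda$ is not finite dimensional. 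The standard repair is to shrink the spectral window to $[-\varepsilon,\varepsilon]$ meeting $\sigma(L_{\lambda_0})$ only in $\{0\}$, so that $V_{\lambda_0}=\ker L_{\lambda_0}$; the complement then carries a finite-dimensional strictly negative part, and one must invoke the Gromoll--Meyer shifting theorem $C_q(\psi_\lambda,0)\cong C_{q-m^-(\lambda)}(\varphi_\lambda,0)$, where $m^-(\lambda)$ is the Morse index of $L_\lambda$ restricted to the complement, rather than reading $\mu_-(L_\lambda)$ off the reduced Hessian directly. Relatedly, your final ``patching'' is where real work remains: the spaces $V_\lambda$ vary with $\lambda$, and to chain the critical-group isomorphisms one needs the Kato transformation functions $U_\lambda$ with $U_\lambda P_\lambda U_\lambda^{-1}=P_{\lambda_0}$ (exactly as in the paper's proof of Proposition \ref{prop}) to pull the reduced functionals back to a fixed finite-dimensional space. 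With these two corrections the argument goes through and is essentially the one in \cite[\S 8.9]{Mawhin}.
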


Different proofs of this theorem can be found for example in \cite[\S 8.9]{Mawhin}, by using the continuity of the critical groups in Morse theory, and in \cite[\S II.7.1]{Kielhoeffer}, where the argument is based on Conley index theory for flows of vector fields.\\
Note that a corresponding assertion holds for families of functionals $\psi$ such that $L_\lambda\in\mathcal{FS}_-(H)$ and $\mu_-(-L_a)\neq\mu_-(-L_b)$, which clearly follows by applying Theorem \ref{bifclas} to $-\psi_\lambda$, $\lambda\in I$. However, the situation turns out to be much more involved if $L_\lambda\in\mathcal{FS}_\ast(H)$, $\lambda\in I$, since in this case neither $\mu_-(L_\lambda)$ nor $\mu_-(-L_\lambda)$ is finite. Atiyah, Patodi and Singer constructed in \cite{AtiyahPatodi} in connection with spectral asymmetry and the $\eta$-invariant an integer valued homotopy invariant for paths in any component of $\mathcal{FS}(H)$ which is called \textit{spectral flow}. Roughly speaking, the spectral flow $\sfl(L,[a,b])$ of a path $L:[a,b]\rightarrow\mathcal{FS}(H)$ is the number of negative eigenvalues of $L_a$ that become positive as the parameter $\lambda$ travels from $a$ to $b$ minus the number of positive eigenvalues of $L_a$ that become negative. If $L_\lambda\in\mathcal{FS}_+(H)$, $\lambda\in [a,b]$, then $\sfl(L,[a,b])=\mu_-(L_a)-\mu_-(L_b)$ and so the non-vanishing of the spectral flow entails bifurcation in Theorem \ref{bifclas} if $L$ is induced by the second derivative of a family of functionals $\psi$ as above. Accordingly, one may ask if the assertion of Theorem \ref{bifclas} remains to be true if $L_\lambda\in\mathcal{FS}_\ast(H)$, $\lambda\in I$, and $\sfl(L,[a,b])\neq 0$. This question was answered in the affirmative by Fitzpatrick, Pejsachowicz and Recht in \cite{SFLPejsachowicz} under the additional assumption that the entire map $\psi:I\times H\rightarrow\mathbb{R}$ is $C^2$, and it was improved recently in the joint work \cite{JacoboIch} of Pejsachowicz and the author to continuous maps $\psi:I\times H\rightarrow\mathbb{R}$ that satisfy the same differentiability assumptions as in Theorem \ref{bifclas}, that is, each $\psi_\lambda$ is $C^2$ and its derivatives depend continuously on the parameter $\lambda\in I$.

%%%%%%%%%%%%%%%%%%%%%%%%%%%%%%%%%%%%%%%%%%%%%%%%%%%%%%%%%%%%%%%%%%%%%%%%%%%%%%%%%%%%%%%%%%%%%%%%%%%%%%%%%%%%%%%%%%%%%%%%%%%%%%%%%%%%%%%%%%%%%%%%%%%%%%%%%%%%%%%%%%%%%%%%%%%%%%%%%%%%%%%%%%%%%%%%%%%%%%%%%%%%%%%%%%%%%%%%%%%%%%%%%%%%%%%%%%%%%%%%%%%%%%%%%%%%%%%%%%%%%%%%%%%%%%%%%%%%%%%%%%%%%%%%%%%%%%%%%%%%%%%%%%%%%%%%%%%%%%%%%%%%%%%%%%%%%%%%%%%%%%%%%%%%%%%%%%%%%%%%%%

\subsubsection{Morse index and crossing forms}
Let us now consider a continuously differentiable path $L:I\rightarrow\mathcal{FS}_+(H)$ and let us denote by $\dot L_{\lambda_0}$ the derivative of $L$ with respect to $\lambda$ at $\lambda_0$. We follow the notation of Robbin and Salamon in \cite{Robbin} and denote by $\Gamma(L,\lambda_0)$ the quadratic form

\[\Gamma(L,\lambda_0)[u]=\langle \dot L_{\lambda_0}u,u\rangle_H,\quad u\in\ker L_{\lambda_0},\]
on the finite dimensional space $\ker L_{\lambda_0}$, which may be zero-dimensional.

\begin{defi}
An instant $\lambda_0\in I$ is called a crossing of the path $L$ if $\ker L_{\lambda_0}\neq 0$, and a crossing $\lambda_0$ is called regular if $\Gamma(L,\lambda_0)$ is non-degenerate.
\end{defi}  

We begin by showing that regular crossings are isolated, which is an immediate consequence of the following lemma.

\begin{lemma}\label{lemma-isolated}
If $\lambda_0$ is a regular crossing of $L$, then there exist $\varepsilon, C>0$ such that for $|\lambda-\lambda_0|<\varepsilon$

\[\|L_\lambda u\|\geq C\,|\lambda-\lambda_0|\,\|u\|,\quad u\in H.\]
\end{lemma}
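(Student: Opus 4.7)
The plan is to exploit the orthogonal splitting $H = K \oplus K^\perp$ where $K := \ker L_{\lambda_0}$. This decomposition is available because $L_{\lambda_0}\in\mathcal{FS}(H)$ is selfadjoint, so $K$ is finite dimensional and $\im L_{\lambda_0} = K^\perp$. Let $P$ and $Q = I - P$ denote the orthogonal projections onto $K$ and $K^\perp$. Two lower bounds will feed the argument: first, $L_{\lambda_0}$ restricts to an isomorphism of $K^\perp$ onto itself, so there exists $c > 0$ with $\|L_{\lambda_0} u_1\| \geq c \|u_1\|$ for every $u_1 \in K^\perp$; second, the hypothesis that $\lambda_0$ is a regular crossing says that $\langle \dot L_{\lambda_0}\cdot,\cdot\rangle$ is non-degenerate on the finite dimensional space $K$, which is equivalent to injectivity of $P\dot L_{\lambda_0}|_K : K \to K$ and therefore yields $\delta > 0$ with $\|P\dot L_{\lambda_0} u_0\| \geq \delta \|u_0\|$ for all $u_0 \in K$.

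With these two estimates in hand, I would use the $C^1$ assumption on $L$ to expand $L_\lambda = L_{\lambda_0} + (\lambda - \lambda_0)\dot L_{\lambda_0} + R(\lambda)$ with $\|R(\lambda)\| = o(|\lambda - \lambda_0|)$, and project the resulting expression for $L_\lambda u$ (with $u = u_0 + u_1$) separately onto $K$ and $K^\perp$. Since $L_{\lambda_0}u_0 = 0$ and $PL_{\lambda_0}u_1 = 0$, the $K$-component $PL_\lambda u$ is of order $|\lambda - \lambda_0|$ and is controlled from below by the regular-crossing estimate applied to $\|u_0\|$, while the $K^\perp$-component $QL_\lambda u$ is dominated by $L_{\lambda_0}u_1$ and is controlled from below by the invertibility estimate applied to $\|u_1\|$. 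In both projections, cross terms appear in which the `wrong' component of $u$ couples through $\dot L_{\lambda_0}$, but these come with a factor $|\lambda - \lambda_0|$ and so are small.

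To combine the two estimates, I would use the elementary inequality $(1+\eta)\|L_\lambda u\| \geq \|P L_\lambda u\| + \eta \|Q L_\lambda u\|$ valid for any $\eta \geq 0$. Choosing a small but fixed $\eta > 0$ so that $\delta - \eta\|\dot L_{\lambda_0}\| > 0$ keeps the $\|u_0\|$ coefficient of order $|\lambda - \lambda_0|$ and positive; then for $|\lambda - \lambda_0|$ small enough, the quantity $\eta c - (1+\eta)|\lambda - \lambda_0|\|\dot L_{\lambda_0}\|$ remains positive and plays the role of a lower bound for the $\|u_1\|$ coefficient. Shrinking $|\lambda - \lambda_0|$ further absorbs the $o(|\lambda - \lambda_0|)$ remainder from $R(\lambda)$ into these two linear terms, leaving an estimate of the form
\[
\|L_\lambda u\| \geq \tilde C\bigl(|\lambda - \lambda_0|\,\|u_0\| + \|u_1\|\bigr)
\]
on a small neighbourhood of $\lambda_0$. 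Since $|\lambda - \lambda_0|$ can be taken $\leq 1$, the right hand side dominates $\tilde C|\lambda - \lambda_0|(\|u_0\| + \|u_1\|) \geq \tilde C|\lambda - \lambda_0|\|u\|$, which gives the desired constant $C$.

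The principal obstacle is the mismatch of scales between the two summands: on $K$ the best lower bound is of order $|\lambda - \lambda_0|$, whereas on $K^\perp$ it is of order one. The weighted combination is a standard device to reconcile them, but the weight $\eta$ and the radius of the admissible neighbourhood of $\lambda_0$ must be chosen in a coordinated way that simultaneously controls the cross terms produced by $\dot L_{\lambda_0}$ and absorbs the $o(|\lambda - \lambda_0|)$ remainder coming from the $C^1$ Taylor expansion of $L$.
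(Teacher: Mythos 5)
Your argument is correct and follows essentially the same route as the paper: the orthogonal splitting $H=\ker L_{\lambda_0}\oplus\im L_{\lambda_0}$, the invertibility of the compression of $L_\lambda$ to $\im L_{\lambda_0}$ for $\lambda$ near $\lambda_0$ by openness of the set of invertible operators, and the invertibility of the difference quotient of the compression to $\ker L_{\lambda_0}$, which is exactly where the regularity of the crossing enters. The only point of divergence is that the paper establishes the estimate separately for $u$ in each summand and leaves the recombination implicit, whereas you carry out the weighted combination $(1+\eta)\|L_\lambda u\|\geq\|PL_\lambda u\|+\eta\|QL_\lambda u\|$ explicitly --- a step that is genuinely needed for arbitrary $u$, since $L_\lambda$ does not preserve the splitting for $\lambda\neq\lambda_0$, so your write-up is if anything the more complete one.
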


\begin{proof}
Since $L_{\lambda_0}$ is a selfadjoint Fredholm operator, there is an orthogonal decomposition $H=\ker L_{\lambda_0}\oplus\im L_{\lambda_0}=:H_0\oplus H_1$. Let us assume at first that $u\in H_1$ and let $P^\perp_{\lambda_0}$ denote the orthogonal projection onto $H_1=H^\perp_0$. Since $P^\perp_{\lambda_0}L_{\lambda_0} P^\perp_{\lambda_0}$ is invertible and $GL(H_1)\subset\mathcal{L}(H_1)$ is open, there exists $\varepsilon'>0$ such that $P^\perp_{\lambda_0}L_\lambda P^\perp_{\lambda_0}\in\mathcal{L}(H_1)$ is invertible for all $|\lambda-\lambda_0|<\varepsilon'$. From $\|P^\perp_{\lambda_0}\|=1$ and $P^\perp_{\lambda_0} u=u$ for all $u\in H_1$, we conclude that there is $C>0$ such that

\[\|L_\lambda u\|\geq \|P^\perp_{\lambda_0} L_\lambda P^\perp_{\lambda_0} u\|\geq C\, \|u\|\geq C\,|\lambda-\lambda_0|\|u\|,\quad u\in H_1,\] 
for all $|\lambda-\lambda_0|<\varepsilon$, where $0<\varepsilon<\min\{1,\varepsilon'\}$.\\
Let us now assume that $u\in H_0$ and let us denote by $P_{\lambda_0}$ the orthogonal projection onto $H_0$. From

\[\Gamma(L,\lambda_0)=\langle P_{\lambda_0}\dot L_{\lambda_0}P_{\lambda_0}u,u\rangle,\quad u\in H_0,\]
and the assumption that $\lambda_0$ is regular, we conclude that $P_{\lambda_0}\dot L_{\lambda_0}P_{\lambda_0}$ is invertible on the finite dimensional space $H_0$. Consequently, there exists $\varepsilon >0$ such that

\[\frac{P_{\lambda_0}L_\lambda P_{\lambda_0}}{\lambda-\lambda_0}=\frac{P_{\lambda_0}L_\lambda P_{\lambda_0}-P_{\lambda_0}L_{\lambda_0} P_{\lambda_0}}{\lambda-\lambda_0}:H_0\rightarrow H_0\]
is invertible for all $|\lambda-\lambda_0|<\varepsilon$. Since $\|P_{\lambda_0}\|=1$ and $P_{\lambda_0}u=u$, $u\in H_0$, there exists $C>0$ such that

\[\|L_\lambda u\|\geq\|P_{\lambda_0}L_\lambda P_{\lambda_0}u\|\geq C\,|\lambda-\lambda_0|\|u\|,\quad u\in H_0.\]
 
\end{proof}

In what follows, we denote by $\sgn\Gamma(L,\lambda_0)$ the signature of the quadratic form $\Gamma(L,\lambda_0)$. Let us recall that $\sgn\Gamma(L,\lambda_0)$ is the difference of the number of positive and negative eigenvalues of the symmetric operator $P_{\lambda_0}\dot L_{\lambda_0}P_{\lambda_0}$ on the finite dimensional space $\ker L_{\lambda_0}$, where $P_{\lambda_0}$ denotes the orthogonal projection onto $\ker L_{\lambda_0}$ as in the previous proof. In other words,

\[\sgn\Gamma(L,\lambda_0)=\mu_-(-P_{\lambda_0}\dot L_{\lambda_0}P_{\lambda_0})-\mu_-(P_{\lambda_0}\dot L_{\lambda_0}P_{\lambda_0}).\]
Let us point out that the following proposition holds for any path $L$, irrespective if it is induced by the Hessians of a family of functionals as in the previous section.

\begin{prop}\label{prop}
If $L_a$, $L_b$ are invertible for some $a,b\in I$, $a<b$, and $L$ has only regular crossings in $(a,b)$, then 

\[\mu_-(L_a)-\mu_-(L_b)=\sum_{\lambda\in (a,b)}{\sgn\Gamma(L,\lambda)}.\]
\end{prop}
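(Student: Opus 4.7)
The plan is to reduce the global identity to a local one at each regular crossing and then evaluate that local jump by a finite-dimensional spectral computation. First, by Lemma~\ref{lemma-isolated} every regular crossing is isolated; since $L_a,L_b$ are invertible and $[a,b]$ is compact, the set of crossings is a finite collection $\lambda_1<\cdots<\lambda_k$ in $(a,b)$, so the right-hand sum is well defined. On each of the open intervals $(a,\lambda_1),(\lambda_1,\lambda_2),\ldots,(\lambda_k,b)$ the path $L$ lies in $\mathcal{FS}_+(H)\cap GL(H)$, hence by property (iii) of the Morse index $\mu_-(L_\lambda)$ is constant there. Telescoping then reduces the claim to the local formula
\[\mu_-(L_{\lambda_0-\varepsilon})-\mu_-(L_{\lambda_0+\varepsilon})=\sgn\Gamma(L,\lambda_0)\]
at an arbitrary regular crossing $\lambda_0\in(a,b)$, valid for all sufficiently small $\varepsilon>0$.

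Next I would localise to the near-kernel of $L_{\lambda_0}$. Pick $\delta>0$ with $\sigma(L_{\lambda_0})\cap[-\delta,\delta]=\{0\}$; by Lemma~\ref{stabspec} there is $\varepsilon>0$ such that $\pm\delta\notin\sigma(L_\lambda)$ for $|\lambda-\lambda_0|<\varepsilon$ and the spectral projector $M_\lambda:=P_{[-\delta,\delta]}(L_\lambda)$ has constant rank $d:=\dim\ker L_{\lambda_0}$. Moreover $M_\lambda$ is $C^1$ in $\lambda$ because its defining contour integral involves only the $C^1$ family of resolvents. The decomposition $H=\im M_\lambda\oplus\ker M_\lambda$ reduces $L_\lambda$, and the spectrum of the restriction $L_\lambda|_{\ker M_\lambda}$ stays clear of $[-\delta,\delta]$, so by property (iii) its Morse index is constant in $\lambda$. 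Property (ii) then collapses the left-hand side to the $d$-dimensional contribution
\[\mu_-(L_{\lambda_0-\varepsilon}|_{\im M_{\lambda_0-\varepsilon}})-\mu_-(L_{\lambda_0+\varepsilon}|_{\im M_{\lambda_0+\varepsilon}}).\]

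Finally I would pull these moving finite-dimensional subspaces back to the fixed space $H_0:=\ker L_{\lambda_0}$ via the Kato transport $V_\lambda:=M_\lambda|_{H_0}$ followed by polar normalisation $U_\lambda:=V_\lambda(V_\lambda^\ast V_\lambda)^{-1/2}$, which produces a $C^1$ family of isometries $H_0\to\im M_\lambda$ with $U_{\lambda_0}=\mathrm{id}_{H_0}$. Setting $A_\lambda:=U_\lambda^\ast L_\lambda U_\lambda$ gives a $C^1$ path of selfadjoint operators on $H_0$ with $A_{\lambda_0}=0$, and the identities $L_{\lambda_0}P_{\lambda_0}=P_{\lambda_0}L_{\lambda_0}=0$ collapse the derivative to $\dot A_{\lambda_0}=P_{\lambda_0}\dot L_{\lambda_0}P_{\lambda_0}$. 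Regularity of the crossing makes $\dot A_{\lambda_0}$ invertible on $H_0$, so a Taylor expansion shows that for $\lambda$ slightly greater (respectively smaller) than $\lambda_0$ the eigenvalues of $A_\lambda$ take the signs of those of $P_{\lambda_0}\dot L_{\lambda_0}P_{\lambda_0}$ (respectively $-P_{\lambda_0}\dot L_{\lambda_0}P_{\lambda_0}$). Since conjugation by the isometry $U_\lambda$ preserves the Morse index, one has $\mu_-(L_\lambda|_{\im M_\lambda})=\mu_-(A_\lambda)$, and counting eigenvalues yields
\[\mu_-(L_{\lambda_0-\varepsilon})-\mu_-(L_{\lambda_0+\varepsilon})=\mu_-(-P_{\lambda_0}\dot L_{\lambda_0}P_{\lambda_0})-\mu_-(P_{\lambda_0}\dot L_{\lambda_0}P_{\lambda_0})=\sgn\Gamma(L,\lambda_0),\]
which is exactly the local formula required. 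The main obstacle is the construction and differentiation of the Kato transport $U_\lambda$: one must justify that $M_\lambda$ is $C^1$ (by differentiating the contour integral defining it), that $V_\lambda^\ast V_\lambda$ is invertible on $H_0$ for $\lambda$ close to $\lambda_0$ (by continuity from $V_{\lambda_0}^\ast V_{\lambda_0}=\mathrm{id}$), and that the derivative of $A_\lambda$ at $\lambda_0$ reduces to $P_{\lambda_0}\dot L_{\lambda_0}P_{\lambda_0}$; everything else is bookkeeping.
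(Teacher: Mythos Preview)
Your proof is correct and follows essentially the same strategy as the paper: reduce to a single crossing via Lemma~\ref{lemma-isolated} and property~(iii), localise with the spectral projector $P_{[-\delta,\delta]}(L_\lambda)$, transport to the fixed kernel $H_0=\ker L_{\lambda_0}$ by a Kato-type construction, identify the derivative of the reduced operator with $P_{\lambda_0}\dot L_{\lambda_0}P_{\lambda_0}$, and read off the signature. The only cosmetic differences are that the paper invokes \cite[Sect.~VI.2]{Kato} for orthogonal $U_\lambda$ on all of $H$ (so the splitting $H_0\oplus H_0^\perp$ is fixed from the start, sidestepping your appeal to property~(iii) on the moving complements $\ker M_\lambda$) and that the endgame is phrased as an explicit straight-line homotopy $t\ell_\lambda+(1-t)(\lambda-\lambda_0)\dot\ell_{\lambda_0}$ rather than a Taylor expansion.
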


\begin{proof}
By Lemma \ref{lemma-isolated} and property iii) of the Morse index, we can clearly assume that there is only a single regular crossing $\lambda_0$ of $L$ in $(a,b)$. Then $\ker L_{\lambda_0}\neq\{0\}$, and we infer from Lemma \ref{0isolated} that there exists $\varepsilon>0$ such that $0$ is the only eigenvalue of $L_{\lambda_0}$ in the interval $[-\varepsilon,\varepsilon]$. Moreover, by Lemma \ref{stabspec}, there exists $\rho>0$ such that $\pm\varepsilon$ is not in the spectrum of $L_\lambda$ for all $\lambda\in[\lambda_0-\rho,\lambda_0+\rho]\subset[a,b]$. For simplicity of notation, we write $P_\lambda:=P_{[-\varepsilon,\varepsilon]}(L_\lambda)$ for the orthogonal projection onto the sum of the eigenspaces of $L_\lambda$ with respect to eigenvalues in $[-\varepsilon,\varepsilon]$ as introduced in Section \ref{section-specproj}. By \eqref{specproj} it is easily seen that that path $P=\{P_\lambda\}:[\lambda_0-\rho,\lambda_0+\rho]\rightarrow\mathcal{L}(H)$ of bounded projections is continuously differentiable, and according to \cite[Sect. VI.2]{Kato} there is an interval $[c,d]\subset[\lambda_0-\rho,\lambda_0+\rho]$ around $\lambda_0$ and a continuously differentiable path $U_\lambda$ of orthogonal operators on $H$ such that

\begin{align}\label{orthogonal}
U_{\lambda_0}=I_H,\quad U_\lambda P_\lambda U^{-1}_\lambda=P_{\lambda_0},\quad\lambda\in[c,d].
\end{align}  
Since $L_\lambda$ commutes with $P_\lambda$, we deduce from \eqref{orthogonal} that $U_\lambda L_\lambda U^{-1}_\lambda$ is reduced by the decomposition $H=H_0\oplus H^\perp_0$, where $H_0=\im P_{\lambda_0}=\ker L_{\lambda_0}$, and moreover, $U_\lambda L_\lambda U^{-1}_\lambda\mid_{H^\perp_0}:H^\perp_0\rightarrow H^\perp_0$ is an isomorphism, $\lambda\in[c,d]$. Consequently, we obtain from the properties i)-iii) of the Morse index 

\begin{align}\label{step1}
\begin{split}
\mu_-(L_a)-\mu_-(L_b)&=\mu_-(L_c)-\mu_-(L_d)=\mu_-(U_cL_cU^{-1}_c)-\mu_-(U_dL_dU^{-1}_d)\\
&=\mu_-(U_cL_cU^{-1}_c\mid_{H_0})+\mu_-(U_cL_cU^{-1}_c\mid_{H^\perp_0})\\
&-(\mu_-(U_dL_dU^{-1}_d\mid_{H_0})+\mu_-(U_dL_dU^{-1}_d\mid_{H^\perp_0}))\\
&=\mu_-(U_cL_cU^{-1}_c\mid_{H_0})-\mu_-(U_dL_dU^{-1}_d\mid_{H_0}).
\end{split}
\end{align}
Let us now consider $\ell_\lambda:=U_\lambda L_\lambda U^{-1}_\lambda\mid_{H_0}$, $\lambda\in[c,d]$, which is a continuously differentiable path of symmetric operators on the finite dimensional space $H_0$. Clearly,

\begin{align*}
\dot\ell_\lambda=\frac{d}{d\lambda}(U_\lambda)\circ L_\lambda\circ U^{-1}_\lambda\mid_{H_0}+U_\lambda\circ\frac{d}{d\lambda}(L_\lambda)\circ U^{-1}_\lambda\mid_{H_0}+U_\lambda\circ L_\lambda\circ\frac{d}{d\lambda}(U^{-1}_\lambda)\mid_{H_0},\, \lambda\in[c,d],
\end{align*}
and from $U_{\lambda_0}=I_H$, we see that

\begin{align}\label{equcross}
\begin{split}
\langle\dot\ell_{\lambda_0}u,u\rangle&=\langle\dot L_{\lambda_0}u,u\rangle+\langle  L_{\lambda_0}\dot U^{-1}_{\lambda_0}u,u\rangle=\langle\dot L_{\lambda_0}u,u\rangle+\langle\dot U^{-1}_{\lambda_0}u,L_{\lambda_0}u\rangle\\
&=\langle\dot L_{\lambda_0}u,u\rangle=\Gamma(L,\lambda_0)[u],\quad u\in H_0=\ker L_{\lambda_0}.
\end{split}
\end{align}
Consequently, by \eqref{step1} the proposition is shown once we have proven that 

\[\mu_-(\ell_c)-\mu_-(\ell_d)=\sgn\langle\dot\ell_{\lambda_0}\cdot,\cdot\rangle.\]
From \eqref{equcross} and the assumption that $\lambda_0$ is regular, we conclude that the operator $\dot\ell_{\lambda_0}$ is invertible on the finite dimensional space $H_0$. Thus, there is a constant $\alpha>0$ such that $\dot\ell_{\lambda_0}+B$ is invertible for all linear operators $B:H_0\rightarrow H_0$ of norm less than $\alpha$. We define for $\lambda=c,d$ two straight paths by

\[T_\lambda:[0,1]\rightarrow\mathcal{L}(H_0),\quad T_\lambda(t)=t\ell_\lambda+(1-t)(\lambda-\lambda_0)\,\dot\ell_{\lambda_0}.\]
From $\ell_{\lambda_0}=L_{\lambda_0}\mid_{H_0}=0$, we see that

\[T_\lambda(t)=(\lambda-\lambda_0)\left(t\left(\frac{\ell_\lambda-\ell_{\lambda_0}}{\lambda-\lambda_0}-\dot\ell_{\lambda_0}\right)+\dot\ell_{\lambda_0}\right),\, t\in[0,1].\]
By choosing $c<\lambda_0$ and $d>\lambda_0$ in \eqref{orthogonal} closer to $\lambda_0$, we can assume that the norm of $\frac{\ell_\lambda-\ell_{\lambda_0}}{\lambda-\lambda_0}-\dot\ell_{\lambda_0}$ is less than the constant $\alpha$ for $\lambda=c,d$, and consequently, $T_\lambda(t)$ is invertible for $t\in[0,1]$ and $\lambda=c,d$. We obtain

\[\mu_-(\ell_c)=\mu_-((c-\lambda_0)\,\dot\ell_{\lambda_0})=\mu_-(-\dot\ell_{\lambda_0})\quad\text{and}\quad \mu_-(\ell_d)=\mu_-((d-\lambda_0)\,\dot\ell_{\lambda_0})=\mu_-(\dot\ell_{\lambda_0}),\]
and finally 
\[\sgn\langle\dot\ell_{\lambda_0}\cdot,\cdot\rangle=\mu_-(-\dot\ell_{\lambda_0})-\mu_-(\dot\ell_{\lambda_0})=\mu_-(\ell_c)-\mu_-(\ell_d).\]
\end{proof}

Finally, we come back to bifurcation theory and consider a family of functionals $\psi:I\times H\rightarrow\mathbb{R}$ such that $0\in H$ is a critical point of all $\psi_\lambda$, $\lambda\in I$, as in Section \ref{section-bifurcation}. Moreover, we denote as before by $L_\lambda$ the Riesz representation of $D^2_0\psi_\lambda$, and we assume that $L_\lambda\in\mathcal{FS}_+(H)$, $\lambda\in I$. We now obtain from Theorem \ref{bifclas} and Proposition \ref{prop} the following bifurcation result, on which our proof of Theorem \ref{theorem} is based. 

\begin{theorem}\label{bifcross}
If $\lambda_0\in(0,1)$ is a regular crossing of $L$ and $\sgn\Gamma(L,\lambda_0)\neq 0$, then $\lambda_0$ is a bifurcation point of critical points for $\psi$.
\end{theorem}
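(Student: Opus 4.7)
The plan is to deduce Theorem \ref{bifcross} as a direct consequence of the three preceding results: the stability estimate in Lemma \ref{lemma-isolated}, the signature formula in Proposition \ref{prop}, and the classical Morse-index bifurcation criterion (Theorem \ref{bifclas}). The strategy is to localise around $\lambda_0$, detect a nonzero Morse-index jump across arbitrarily small intervals centred at $\lambda_0$, and then produce a sequence of bifurcating critical points converging to $(\lambda_0,0)$.

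First, since $\lambda_0$ is a regular crossing of $L$, Lemma \ref{lemma-isolated} yields some $\varepsilon_0>0$ such that $L_\lambda$ is invertible for every $\lambda$ with $0<|\lambda-\lambda_0|<\varepsilon_0$. In particular, $\lambda_0$ is an isolated crossing. Choose $n_0$ so that for every $n\geq n_0$ one has $1/n<\varepsilon_0$ and $[\lambda_0-1/n,\lambda_0+1/n]\subset(0,1)$, and set $a_n=\lambda_0-1/n$, $b_n=\lambda_0+1/n$. Then $L_{a_n}$ and $L_{b_n}$ are invertible, and $\lambda_0$ is the unique crossing of $L$ in $(a_n,b_n)$, which is regular by assumption. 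Proposition \ref{prop} applied to the interval $[a_n,b_n]$ therefore gives
\[
\mu_-(L_{a_n})-\mu_-(L_{b_n})=\sgn\Gamma(L,\lambda_0)\neq 0.
\]

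Hence Theorem \ref{bifclas} furnishes, for each $n\geq n_0$, a bifurcation point $\mu_n\in(a_n,b_n)$ of critical points of $\psi$. By Definition \ref{defi-bifurcationfunctional} one can then pick $(\lambda_n,u_n)\in I\times H$ with $u_n\neq 0$ a critical point of $\psi_{\lambda_n}$ and with $|\lambda_n-\mu_n|<1/n$, $\|u_n\|_H<1/n$. Since $|\lambda_n-\lambda_0|\leq |\lambda_n-\mu_n|+|\mu_n-\lambda_0|<2/n$, the sequence $(\lambda_n,u_n)$ converges to $(\lambda_0,0)$, which is exactly the statement that $\lambda_0$ is a bifurcation point of critical points of $\psi$.

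The argument is essentially bookkeeping, and I do not anticipate a genuine obstacle. The only delicate point is that Theorem \ref{bifclas} returns a bifurcation instant \emph{somewhere} in $(a_n,b_n)$ rather than at $\lambda_0$ itself; this is handled by the standard diagonal extraction above, which is made possible precisely by the isolation of $\lambda_0$ provided by Lemma \ref{lemma-isolated}.
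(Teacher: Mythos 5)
Your argument is correct and is precisely the intended derivation: Lemma \ref{lemma-isolated} isolates the crossing, Proposition \ref{prop} converts $\sgn\Gamma(L,\lambda_0)\neq 0$ into a Morse-index jump across arbitrarily small intervals $(a_n,b_n)$ around $\lambda_0$, and Theorem \ref{bifclas} then yields bifurcation; the paper leaves exactly this combination to the reader. Your diagonal extraction correctly upgrades ``a bifurcation point somewhere in $(a_n,b_n)$'' to ``$\lambda_0$ itself is a bifurcation point,'' which is the only detail the paper glosses over.
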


%%%%%%%%%%%%%%%%%%%%%%%%%%%%%%%%%%%%%%%%%%%%%%%%%%%%%%%%%%%%%%%%%%%%%%%%%%%%%%%%%%%%%%%%%%%%%%%%%%%%%%%%%%%%%%%%%%%%%%%%%%%%%%%%%%%%%%%%%%%%%%%%%%%%%%%%%%%%%%%%%%%%%%%%%%%%%%%%%%%%%%%%%%%%%%%%%%%%%%%%%%%%%%%%%%%%%%%%%%%%%%%%%%%%%%%%%%%%%%%%%%%%%%%%%%%%%%%%%%%%%%%%%%%%%%%%%%%%%%%%%%%%%%%%%%%%%%%%%%%%%%%%%%%%%%%%%%%%%%%%%%%%%%%%%%%%%%%%%%%%%%%%%%%%%%%%%%%%%%%%%%

\section{Elliptic Dirichlet problems on shrinking domains}
The aim of this section is to prove Theorem \ref{theorem}. Let us denote by $\nu=(\nu_1,\ldots,\nu_n)$ the outer normal unit vector field on $\partial\Omega$ and recall that

\begin{align}\label{IBP}
\int_{\Omega}{\frac{\partial u}{\partial x_i}(x)\,v(x)dx}=-\int_{\Omega}{u(x)\frac{\partial v}{\partial x_i}(x)dx}+\int_{\partial\Omega}{u(x)\,v(x)\nu_i(x)\,dS}
\end{align} 
for all $u,v\in C^1(\Omega)\cap C(\overline{\Omega})$. From this integration by parts formula, we clearly see that a weak solution of the boundary value problem \eqref{bvpresc} is a function $\hat u\in H^1_0(\Omega_r)$ such that

\begin{align}\label{weaksol}
\int_{\Omega_r}{\sum^n_{i,j=1}{a_{ij}(x)\frac{\partial\hat u}{\partial x_i}(x)\frac{\partial\hat v}{\partial x_j}(x)}dx}+\int_{\Omega_r}{g( x,\hat u(x))\,\hat v(x)\,dx}=0,\quad \hat v\in H^1_0(\Omega_r).
\end{align}
In what follows, we denote for the sake of simplicity by $B:=B(0,1)$ the unit ball around $0$ in $\mathbb{R}^n$. It is readily seen that, after a change of coordinates, \eqref{weaksol} is equivalent to

\begin{align}\label{weaksolII}
\int_{B}{\sum^n_{k,l=1}{\tilde{a}_{kl}(r\cdot x)\frac{\partial u}{\partial x_k}(x)\frac{\partial v}{\partial x_l}(x)}dx}+r^2\int_{B}{\tilde g(r\cdot x,u(x))\,v(x)\,dx}=0,\quad v\in H^1_0(B),
\end{align}
where 

\begin{align}\label{usol}
u(x)=\hat u(\Phi^{-1}(r\cdot x)),\quad x\in B,
\end{align}
$\tilde{g}$ is continuously differentiable on $\overline{B}\times\mathbb{R}$, and $\tilde{a}_{kl}\in C^\infty(\overline{B})$ are such that $\tilde{a}_{kl}=\tilde{a}_{lk}$, $1\leq k,l\leq n$, and

\begin{align}\label{positive}
\sum^n_{k,l=1}{\tilde{a}_{kl}(x)\xi_k\xi_l>0},\quad x\in B,\,(\xi_1,\ldots,\xi_n)\in\mathbb{R}^n\setminus\{0\}.
\end{align} 
Let us consider the family of functionals $\psi:I\times H^1_0(B)\rightarrow\mathbb{R}$ defined by

\[\psi(r,u)=\frac{1}{2}\int_{B}{\sum^n_{k,l=1}{\tilde a_{kl}(r\cdot x)\,\frac{\partial u}{\partial x_k}(x)\frac{\partial u}{\partial x_l}(x)}\,dx}+r^2\int_{B}{G(r\cdot x,u(x))\,dx},\] 
where

\[G(x,t)=\int^t_0{\tilde g(x,\xi)\,d\xi},\quad(x,t)\in B\times\mathbb{R},\]
is a primitive of $g$. According to \cite[Prop. B.10]{Rabinowitz}, each $\psi_r$ is $C^2$ and the derivative of $\psi_r$ at $u\in H^1_0(B)$ is given by \eqref{weaksolII}, i.e.,

\[(D_u\psi_r)v=\int_{B}{\sum^n_{k,l=1}{\tilde{a}_{kl}(r\cdot x)\frac{\partial u}{\partial x_k}(x)\frac{\partial v}{\partial x_l}(x)}dx}+r^2\int_{B}{\tilde g(r\cdot x,u(x))\,v(x)\,dx},\quad v\in H^1_0(B).\]
Hence $D_u\psi_r=0\in(H^1_0(B))^\ast$ if and only if the function $\hat u$ defined by \eqref{usol} is a weak solution of the boundary value problem \eqref{bvpresc} on $\Omega_r$. In particular, $0\in H^1_0(B)$ is a critical point of all $\psi_r$, and $r^\ast\in(0,1]$ is a bifurcation instant for $\psi$ if and only if it is a bifurcation instant for the boundary value problems \eqref{bvpresc}. Consequently, the study of bifurcation for \eqref{bvpresc} as defined in the introduction boils down to an investigation of bifurcation for $\psi$ in the sense of Definition \ref{defi-bifurcationfunctional}.\\
By \cite[Prop. B.34]{Rabinowitz}, the second derivative of $\psi_r$ at $0\in H^1_0(B)$ is given by

\[D^2_0\psi_r(u,v)=\int_{B}{\sum^n_{k,l=1}{\tilde{a}_{kl}(r\cdot x)\frac{\partial u}{\partial x_k}(x)\frac{\partial v}{\partial x_l}(x)}\,dx}+r^2\int_{B}{\tilde f(r\cdot x)\,u(x)\,v(x)\,dx},\quad u,v\in H^1_0(B),\]
where $\tilde{f}(x)=\frac{\partial\tilde g}{\partial \xi}(x,0)$, $x\in B$. We now define in accordance with the notation in the previous section a bounded symmetric operator $L_r$ by

\[\langle L_r u,v\rangle_{H^1_0(B)}=D^2_0\psi_r(u,v),\quad u,v\in H^1_0(B),\]
which depends continuously differentiable on the parameter $r$. Clearly, $u\in\ker L_r$ if and only if the rescaled function $\hat u$ defined by \eqref{usol} satisfies

\[\int_{\Omega_r}{\sum^n_{i,j=1}{a_{ij}(x)\frac{\partial\hat u}{\partial x_i}(x)\frac{\partial \hat v}{\partial x_j}(x)}}+\int_{\Omega_r}{f(x)\,\hat u(x)\,\hat v(x)\,dx}=0,\quad \hat v\in H^1_0(\Omega_r),\]
which means that $\hat u$ is a weak solution of the linearised boundary value problem \eqref{bvpII}. Since weak solutions of \eqref{bvpII} are smooth by classical regularity theory (cf. \cite[Cor. 8.11]{Gilbarg}), we conclude in particular that

\begin{align}\label{ker=m}
\dim\ker L_r=m(r),\quad r\in(0,1],
\end{align}
where $m(r)$ is the number introduced in \eqref{multiplicity}.\\
In summary, Theorem \ref{theorem} is proven once we have shown that the bifurcation instants $r$ of $\psi$ in $(0,1]$ are precisely the instants for which $\ker L_r\neq 0$.

\begin{lemma}\label{Lfred}
The operators $L_r$, $r\in[0,1]$, are Fredholm. 
\end{lemma}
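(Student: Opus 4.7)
The plan is to decompose $L_r$ as the sum $L_r = A_r + K_r$, where $A_r$ encodes the principal (gradient) part of the bilinear form and $K_r$ encodes the zeroth-order term, and then to show that $A_r$ is an isomorphism while $K_r$ is compact. The conclusion that $L_r$ is Fredholm (of index zero) will then follow from the standard fact that compact perturbations of invertible operators are Fredholm. Throughout I will equip $H^1_0(B)$ with the Dirichlet inner product $\langle u,v\rangle_{H^1_0(B)}=\int_B \nabla u\cdot\nabla v\,dx$, which is equivalent to the standard one by the Poincar\'e inequality.

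For the principal part, I would define $A_r\in\mathcal{L}(H^1_0(B))$ via
\[
\langle A_r u,v\rangle_{H^1_0(B)}=\int_B\sum_{k,l=1}^n\tilde a_{kl}(r\cdot x)\,\frac{\partial u}{\partial x_k}(x)\frac{\partial v}{\partial x_l}(x)\,dx.
\]
Boundedness of $A_r$ is immediate from Cauchy--Schwarz and the fact that $\tilde a_{kl}\in C^\infty(\overline B)$. For invertibility I would combine the pointwise positivity \eqref{positive} with continuity of $\tilde a_{kl}$ on the compact set $r\cdot\overline B\subset\overline B$ to obtain a constant $\theta_r>0$ with $\sum_{k,l}\tilde a_{kl}(r\cdot x)\xi_k\xi_l\geq\theta_r|\xi|^2$ for $x\in\overline B$ and $\xi\in\mathbb{R}^n$; this yields the coercivity estimate $\langle A_r u,u\rangle_{H^1_0(B)}\geq\theta_r\|u\|_{H^1_0(B)}^2$, and Lax--Milgram (or, equivalently, the symmetry and positive-definiteness of $A_r$) gives that $A_r$ is a linear isomorphism.

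For the zeroth-order part I would put
\[
\langle K_r u,v\rangle_{H^1_0(B)}=r^2\int_B\tilde f(r\cdot x)\,u(x)\,v(x)\,dx,
\]
and factor this through the Rellich--Kondrachov compact embedding $\iota\colon H^1_0(B)\hookrightarrow L^2(B)$: explicitly, $K_r=r^2\,\iota^{\ast}\circ M_{\tilde f(r\cdot)}\circ\iota$, where $M_{\tilde f(r\cdot)}$ denotes multiplication by the bounded function $\tilde f(r\cdot)\in C(\overline B)$. Since $\iota$ is compact and $M_{\tilde f(r\cdot)}$ and $\iota^{\ast}$ are bounded, $K_r$ is compact. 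Combining with the previous step, $L_r=A_r+K_r$ is a compact perturbation of an isomorphism and hence Fredholm of index zero, which is the claim.

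The only genuinely delicate point, and hence the main obstacle, is extracting uniform ellipticity on the closed ball in order to apply Lax--Milgram; this requires that the pointwise positivity \eqref{positive} together with the continuity of $\tilde a_{kl}$ on the compact set $\overline B$ (inherited from the smoothness of the $a_{ij}$ on $\overline\Omega$ under the pull-back by $\Phi$) upgrades to a strictly positive lower bound uniform in $x$. Once that uniform ellipticity is in hand, the remainder of the argument is entirely routine.
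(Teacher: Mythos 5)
Your decomposition $L_r=A_r+K_r$ with $A_r$ invertible by coercivity and $K_r$ compact via the embedding $H^1_0(B)\hookrightarrow L^2(B)$ is exactly the argument in the paper, which writes $L_r=L_{r,1}+L_{r,2}$ and concludes in the same way that $L_r$ is a compact perturbation of an invertible operator. The point you flag about upgrading the pointwise positivity \eqref{positive} to a uniform lower bound on $\overline B$ is handled correctly by your compactness/continuity argument (the paper simply asserts it), so the proof is complete and essentially identical to the paper's.
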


\begin{proof}
We set $L_r=L_{r,1}+L_{r,2}\in\mathcal{L}(H^1_0(B))$, where

\begin{align*}
\langle L_{r,1}u,v\rangle_{H^1_0(B)}&=\int_{B}{\sum^n_{k,l=1}{\tilde a_{kl}(r\cdot x)\frac{\partial u}{\partial x_k}(x)\frac{\partial v}{\partial x_l}(x)}\,dx},\quad u,v\in H^1_0(B),\\
\langle L_{r,2}u,v\rangle_{H^1_0(B)}&=r^2\int_{B}{\tilde f(r\cdot x)\,u(x)\,v(x)\,dx,\quad u,v\in H^1_0(B)}.
\end{align*}
From the ellipticity condition \eqref{positive} and the boundedness of the functions $a_{kl}$ on $B$, it clearly follows that $L_{r,1}$ is invertible. $L_{r,2}$ extends to a bounded operator on $L^2(B)$ and hence its restriction to $H^1_0(B)$ is compact, because of the compactness of the embedding $H^1_0(B)\hookrightarrow L^2(B)$. Consequently, $L_r$ is a compact perturbation of an invertible operator and hence Fredholm.  
\end{proof}

We now split the proof of Theorem \ref{theorem} into two parts.

%%%%%%%%%%%%%%%%%%%%%%%%%%%%%%%%%%%%%%%%%%%%%%%%%%%%%%%%%%%%%%%%%%%%%%%%%%%%%%%%%%%%%%%%%%%%%%%%%%%%%%%%%%%%%%%%%%%%%%%%%%%%%%%%%%%%%%%%%%%%%%%%%%%%%%%%%%%%%%%%%%%%%%%%%%%%%%%%%%%%%%%%%%%%%%%%%%%%%%%%%%%%%%%%%%%%%%%%%%%%%%%%%%%%%%%%%%%%%%%%%%%%%%%%%%%%%%%%%%%%%%%%%%%%%%%%%%%%%%%%%%%%%%%%%%%%%%%%%%%%%%%%%%%%%%%%%%%%%%%%%%%%%%%%%%%%%%%%%%%%%%%%%%%%%%%%%%%%%%%%%%

\subsubsection*{Every bifurcation instant $r_0$ of critical points for $\psi$ is a crossing of $L$}

The assertion follows easily from the implicit function theorem in Banach spaces (cf. \cite[\S 2.2]{Ambrosetti}). Here we only use a special case, which reads as follows.

\begin{theorem}
Let $X,Y$ be Banach spaces and $F:I\times X\rightarrow Y$ continuous. Assume that the equation

\begin{align}\label{equ}
F(\lambda,x)=0
\end{align}
has a solution $(\lambda_0,x_0)\in (0,1)\times X$, and that the derivative $D_xF_\lambda$ of $F_\lambda:=F(\lambda,\cdot)$ with respect to $x\in X$ exists and depends continuously on $(\lambda,x)\in I\times X$. If $D_{x_0}F_{\lambda_0}\in GL(X,Y)$, then there exists a neighbourhood $U\times V\subset I\times X$ of $(\lambda_0,x_0)$ and a continuous map $f:U\rightarrow V$ such that $f(\lambda_0)=x_0$, $F(\lambda,f(\lambda))=0$ for all $\lambda\in U$ and every solution of \eqref{equ} in $U\times V$ is of the form $(\lambda,f(\lambda))$.  
\end{theorem}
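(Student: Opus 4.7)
The plan is to recast the implicit equation as a parameter-dependent fixed-point problem and invoke the Banach contraction principle; this is the standard Newton-type proof of the implicit function theorem, and the only point requiring attention is that $F$ is assumed merely continuous in $\lambda$, not jointly $C^1$. Set $A := D_{x_0}F_{\lambda_0} \in GL(X,Y)$ and define
\[
T : I \times X \longrightarrow X, \qquad T(\lambda, x) := x - A^{-1} F(\lambda, x),
\]
so that $F(\lambda,x)=0$ if and only if $x$ is a fixed point of $T_\lambda := T(\lambda,\cdot)$. Continuity of $F$ makes $T$ continuous, and $D_x T(\lambda, x) = I_X - A^{-1} D_x F(\lambda,x)$ depends continuously on $(\lambda,x)$ and vanishes at $(\lambda_0, x_0)$.

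First I would contract: by continuity of $D_x T$, choose $\delta>0$ and an open $U_1 \subset I$ around $\lambda_0$ with $\|D_x T(\lambda,x)\| \leq \tfrac12$ on $U_1 \times \overline{B_\delta(x_0)}$, so that the mean value inequality makes each $T_\lambda$ a $\tfrac12$-Lipschitz map on the closed ball. To enforce self-mapping I would use $T(\lambda_0,x_0)=x_0$ together with continuity of $T$ to shrink to an open $U \subset U_1$ with $\|T(\lambda,x_0)-x_0\| \leq \tfrac\delta2$ for all $\lambda \in U$; combined with the contraction estimate this gives $\|T_\lambda(x)-x_0\| \leq \tfrac\delta2 + \tfrac12\|x-x_0\| \leq \delta$ throughout $\overline{B_\delta(x_0)}$. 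Setting $V := \overline{B_\delta(x_0)}$, the Banach fixed-point theorem now produces for each $\lambda \in U$ a unique $f(\lambda) \in V$ with $F(\lambda,f(\lambda))=0$, which yields $f(\lambda_0)=x_0$ and the uniqueness clause in the conclusion.

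The main obstacle, and really the only one, is the continuity of $f$, since we have no Lipschitz or differentiability hypothesis in $\lambda$. I would handle it directly, writing
\[
\|f(\lambda) - f(\lambda')\| = \|T_\lambda(f(\lambda)) - T_{\lambda'}(f(\lambda'))\| \leq \|T(\lambda, f(\lambda)) - T(\lambda', f(\lambda))\| + \tfrac12 \|f(\lambda) - f(\lambda')\|,
\]
by the triangle inequality and the uniform $\tfrac12$-Lipschitz bound; rearranging gives $\|f(\lambda)-f(\lambda')\| \leq 2\|T(\lambda,f(\lambda)) - T(\lambda',f(\lambda))\| \to 0$ as $\lambda'\to\lambda$ by joint continuity of $T$. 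No further regularity of $F$ in $\lambda$ is required, which is precisely why the statement only asserts continuity of $f$ rather than the usual $C^1$ dependence.
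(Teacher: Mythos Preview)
Your argument is correct: the Newton--Kantorovich reformulation $T(\lambda,x)=x-A^{-1}F(\lambda,x)$ followed by the contraction mapping principle is the standard route, and you handle the one delicate point---continuity of $f$ under mere continuity of $F$ in $\lambda$---cleanly via the uniform contraction estimate. A cosmetic remark: you take $V=\overline{B_\delta(x_0)}$, which is a neighbourhood of $x_0$ but not open; if one insists on an open $V$, sharpening the self-mapping bound to a strict inequality $\|T(\lambda,x_0)-x_0\|<\tfrac{\delta}{2}$ (which continuity permits) forces $f(\lambda)$ into the open ball.

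As for comparison with the paper: there is nothing to compare against. The paper does not prove this theorem; it quotes it as a special case of the implicit function theorem in Banach spaces, with a reference to \cite[\S 2.2]{Ambrosetti}, and then applies it to conclude that non-crossings cannot be bifurcation instants. Your proof is exactly the argument one finds in such references.
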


Indeed, let us assume on the contrary that $\ker L_{r_0}=\{0\}$. We define a map

\[F:I\times H^1_0(B)\rightarrow (H^1_0(B))^\ast,\quad F(r,u)=D_u\psi_r\] 
and note that by assumption $F(r,0)=0$ for all $r\in I$. Since $(D_0F_{r_0}u)v=\langle L_{r_0}u,v\rangle$, $u,v\in H^1_0(B)$, and $\ker L_{r_0}=0$ by assumption, we obtain from Lemma \ref{Lfred} that $D_0F_{r_0}\in GL(H^1_0(B),(H^1_0(B))^\ast)$. Consequently, all solutions of the equation $F(r,u)=0$ in a neighbourhood of $(r_0,0)\in I\times H^1_0(B)$ are of the form $(r,0)$.\\
Finally, let us mention that, since $m(1)=0$ by assumption, bifurcation can only occur in the interior of $I$.

%%%%%%%%%%%%%%%%%%%%%%%%%%%%%%%%%%%%%%%%%%%%%%%%%%%%%%%%%%%%%%%%%%%%%%%%%%%%%%%%%%%%%%%%%%%%%%%%%%%%%%%%%%%%%%%%%%%%%%%%%%%%%%%%%%%%%%%%%%%%%%%%%%%%%%%%%%%%%%%%%%%%%%%%%%%%%%%%%%%%%%%%%%%%%%%%%%%%%%%%%%%%%%%%%%%%%%%%%%%%%%%%%%%%%%%%%%%%%%%%%%%%%%%%%%%%%%%%%%%%%%%%%%%%%%%%%%%%%%%%%%%%%%%%%%%%%%%%%%%%%%%%%%%%%%%%%%%%%%%%%%%%%%%%%%%%%%%%%%%%%%%%%%%%%%%%%%%%%%%%%%

\subsubsection*{Every crossing $r_0$ of $L$ is a bifurcation instant of critical points of $\psi$}
Let $r_0$ be a crossing of $L$ and note that $r_0\in(0,1)$ by \eqref{ker=m} and the assumption that $m(1)=0$. Our aim is to show that $r_0$ is regular and that the signature of the corresponding crossing form does not vanish.\\
Let $u\in\ker L_{r_0}$ and let us note for later reference that

\begin{align}\label{crossingform}
\Gamma(L,r_0)[u]=\int_B{\sum^n_{k,l=1}{\langle\nabla\tilde a_{kl}(r_0\cdot x),x\rangle\frac{\partial u}{\partial x_k}\frac{\partial u}{\partial x_l}\,dx}}+\int_B{\frac{d}{d r}\mid_{r=r_0}(r^2\tilde{f}(r\cdot x))u(x)^2\,dx}.
\end{align} 
As already observed above, from $u\in\ker L_{r_0}$, we obtain that $u$ is smooth and so

\begin{align}\label{classol}
\begin{split}
0&=-\sum^n_{k,l=1}{\frac{\partial}{\partial x_k}\left(\tilde a_{kl}(r_0\cdot x)\frac{\partial u}{\partial x_l}(x)\right)}+r^2_0\tilde{f}(r_0\cdot x)u(x)\\
&=-\sum^n_{k,l=1}{r_0\frac{\partial\tilde a_{kl}}{\partial x_k}(r_0\cdot x)\frac{\partial u}{\partial x_l}(x)+\tilde a_{kl}(r_0\cdot x)\frac{\partial^2 u}{\partial x_k\partial x_l}(x)}+r^2_0f(r_0\cdot x)u(x),\quad x\in B.
\end{split}
\end{align}
We now introduce a new function by $v_r(x):=u(\frac{r}{r_0}\cdot x)$, $r\in(0,r_0]$, $x\in B$, and write

\begin{align}\label{udot}
\dot{u}(x):=\frac{d}{dr}\mid_{r=r_0}v_r(x)=\frac{1}{r_0}\langle\nabla u(x),x\rangle,\quad x\in B.
\end{align}
From \eqref{classol}, we infer that

\begin{align*}
&-\sum^n_{k,l=1}{\frac{\partial}{\partial x_k}\left(\tilde{a}_{kl}(r\cdot x)\frac{\partial v_r}{\partial x_l}(x)\right)}+r^2f(r\cdot x)v_r(x)\\
&=\frac{r^2}{r^2_0}\left(-\sum^n_{k,l=1}{(r_0\frac{\partial\tilde a_{kl}}{\partial x_k}(r_0\frac{r}{r_0}\,x)\frac{\partial u}{\partial x_l}(\frac{r}{r_0}\,x)+\tilde a_{kl}(r_0\frac{r}{r_0}\,x)\frac{\partial ^2 u}{\partial x_k\partial x_l}(\frac{r}{r_0}\,x))}+r^2_0\tilde f(r_0\frac{r}{r_0}\,x)u(\frac{r}{r_0}\,x)\right)
\end{align*}
vanishes, and by differentiating with respect to $r$ at $r=r_0$ we get

\begin{align}\label{equ1}
\begin{split}
0&=-\sum^n_{k,l=1}{\frac{\partial}{\partial x_k}\left(\langle\nabla \tilde a_{kl}(r_0\cdot x),x\rangle\frac{\partial u}{\partial x_l}\right)}-\sum^n_{k,l=1}{\frac{\partial}{\partial x_k}\left(\tilde a_{kl}(r_0\cdot x)\frac{\partial\dot{u}}{\partial x_l}\right)}\\
&+\frac{d}{d r}\mid_{r=r_0}(r^2\tilde{f}(r\cdot x))u(x)+r^2_0\tilde{f}(r_0\cdot x)\dot{u}(x),\quad x\in B.
\end{split}
\end{align} 
Now we multiply \eqref{equ1} by $u$ and integrate over $B$:

\begin{align*}
0&=-\int_B{\sum^n_{k,l=1}{\frac{\partial}{\partial x_k}\left(\langle\nabla \tilde a_{kl}(r_0\cdot x),x\rangle\frac{\partial u}{\partial x_l}\right)}u(x)\,dx}-\int_B{\sum^n_{k,l=1}{\frac{\partial}{\partial x_k}\left(\tilde a_{kl}(r_0\cdot x)\frac{\partial\dot{u}}{\partial x_l}\right)}u(x)\,dx}\\
&+\int_B{\frac{d}{dr}\mid_{r=r_0}(r^2\tilde{f}(r\cdot x))u(x)^2\,dx}+\int_B{r^2_0\tilde{f}(r_0\cdot x)\dot{u}(x)u(x)\,dx}.
\end{align*} 
Since $u$ vanishes on $\partial B$, we obtain from \eqref{IBP}

\begin{align*}
0&=\int_B{\sum^n_{k,l=1}{\langle\nabla\tilde a_{kl}(r_0\cdot x),x\rangle\frac{\partial u}{\partial x_l}}\frac{\partial u}{\partial x_k}\,dx}-\int_{\partial B}{\left(\sum^n_{k,l=1}{\langle\nabla \tilde a_{kl}(r_0\cdot x),x\rangle \nu_k(x)\frac{\partial u}{\partial x_l}}\right)u(x)\,dS}\\
&+\int_B{\sum^n_{k,l=1}{\tilde a_{kl}(r_0\cdot x)\frac{\partial\dot{u}}{\partial x_l}}\frac{\partial u}{\partial x_k}\,dx}-\int_{\partial B}{\left(\sum^n_{k,l=1}{\tilde a_{kl}(r_0\cdot x)\nu_k(x)\frac{\partial\dot{u}}{\partial x_l}}\right)u(x)\,dS}\\
&+\int_B{\frac{d}{dr}\mid_{r=r_0}(r^2\tilde{f}(r\cdot x))u(x)^2\,dx}+\int_B{r^2_0\tilde{f}(r_0\cdot x)\dot{u}(x)u(x)\,dx}\\
&=\int_B{\sum^n_{k,l=1}{\langle\nabla\tilde a_{kl}(r_0\cdot x),x\rangle\frac{\partial u}{\partial x_l}}\frac{\partial u}{\partial x_k}\,dx}-\int_B{\sum^n_{k,l=1}{\frac{\partial}{\partial x_l}\left(\tilde a_{kl}(r_0\cdot x)\frac{\partial u}{\partial x_k}\right)}\dot{u}(x)\,dx}\\
&+\int_{\partial B}{\left(\sum^n_{k,l=1}{\tilde a_{kl}(r_0\cdot x)\nu_l(x)\frac{\partial u}{\partial x_k}}\right)\dot{u}(x)\,dS}\\
&+\int_B{\frac{d}{dr}\mid_{r=r_0}(r^2\tilde{f}(r\cdot x))u(x)^2\,dx}+\int_B{r^2_0\tilde{f}(r_0\cdot x)\dot{u}(x)u(x)\,dx}.
\end{align*}
Now we use the first equality in \eqref{classol}, as well as \eqref{crossingform} and \eqref{udot} to conclude that

\begin{align*}
\Gamma(L,r_0)[u]&=-\frac{1}{r_0}\int_{\partial B}{\left(\sum^n_{k,l=1}{\tilde a_{kl}(r_0\cdot x)\nu_l(x)\frac{\partial u}{\partial x_k}}\right)\langle \nabla u(x),x\rangle\,dS},
\end{align*}
which can be written as

\begin{align*}
\Gamma(L,r_0)[u]=-\frac{1}{r_0}\int_{\partial B}{\langle A(r_0\cdot x)x,\nabla u(x) \rangle\, \langle \nabla u(x),x\rangle\,dS},
\end{align*}
where $A(x):=\{\tilde a_{kl}(x)\}$, $x\in B$, and we use that $\nu(x)=x$ for all $x\in\partial B$. Denoting by $(A(r_0\cdot x)x)^T$, $x\in\partial B$, the component of the vector $A(r_0\cdot x)x$ tangential to $\partial B$, we have

\[\langle A(r_0\cdot x)x,\nabla u(x)\rangle=\langle\nabla u(x),x\rangle\,\langle A(r_0\cdot x)x,x\rangle+\langle\nabla u(x),(A(r_0\cdot x)x)^T\rangle\] 
and it follows that 

\begin{align*}
\Gamma(L,r_0)[u]&=-\frac{1}{r_0}\int_{\partial B}{\langle\nabla u(x),x\rangle^2\,\langle A(r_0\cdot x)x,x \rangle\,dS}\\
&-\frac{1}{r_0}\int_{\partial B}{\langle\nabla u(x),x\rangle\,\langle\nabla u(x), (A(r_0\cdot x)x)^T \rangle\,dS}.
\end{align*}
From

\begin{align*}
&\diverg(u(x)\langle x,\nabla u(x)\rangle(A(r_0\,x)x)^T)=\langle x,\nabla u(x)\rangle\langle\nabla u(x),(A(r_0\,x)x)^T\rangle\\
&+u(x)\langle\nabla\langle x,\nabla u(x)\rangle,(A(r_0\,x)x)^T\rangle+u(x)\langle x,\nabla u(x)\rangle\diverg(A(\lambda_0x)x)^T,
\end{align*}
and $u\mid_{\partial\Omega}=0$, we see that 

\[\langle\nabla u(x),x\rangle\,\langle\nabla u(x),(A(r_0\cdot x)x)^T \rangle=\diverg(u(x)\langle x,\nabla u(x)\rangle(A(r_0\cdot x)x)^T),\quad x\in\partial B,\]
and now Stokes' theorem gives

\begin{align}\label{crossfinal}
\Gamma(L,r_0)[u]=-\frac{1}{r_0}\int_{\partial B}{\langle\nabla u(x),x\rangle^2\,\langle A(r_0\cdot x)x,x \rangle\,dS}\leq 0.
\end{align}
Here we use that $A(x)$ is positive definite for all $x\in B$, which follows from the ellipticity condition \eqref{positive}. Finally, even the strict inequality holds in \eqref{crossfinal} if $u\neq 0$. For otherwise, 

\[\langle\nabla u(x),x\rangle=\langle\nabla u(x),\nu(x)\rangle=\frac{\partial u}{\partial\nu}(x)=0,\quad x\in\partial B,\]
and since $u\mid_{\partial B}=0$, this implies $u\equiv 0$ by the uniqueness of the Cauchy problem for elliptic second order operators (cf. \cite{Calderon}, \cite{Hormander}).\\
In summary, we have shown that $\Gamma(L,r_0)$ is negative definite, and so in particular non-degenerate with the non-vanishing signature

\begin{align}\label{crossingformIII}
\sgn\Gamma(L,r_0)=-\dim\ker L_{r_0}=-m(r_0),
\end{align}
where the second equality was already shown in \eqref{ker=m}.
By Theorem \ref{bifcross}, $r_0$ is a bifurcation instant, and so Theorem \ref{theorem} is proven.

%%%%%%%%%%%%%%%%%%%%%%%%%%%%%%%%%%%%%%%%%%%%%%%%%%%%%%%%%%%%%%%%%%%%%%%%%%%%%%%%%%%%%%%%%%%%%%%%%%%%%%%%%%%%%%%%%%%%%%%%%%%%%%%%%%%%%%%%%%%%%%%%%%%%%%%%%%%%%%%%%%%%%%%%%%%%%%%%%%%%%%%%%%%%%%%%%%%%%%%%%%%%%%%%%%%%%%%%%%%%%%%%%%%%%%%%%%%%%%%%%%%%%%%%%%%%%%%%%%%%%%%%%%%%%%%%%%%%%%%%%%%%%%%%%%%%%%%%%%%%%%%%%%%%%%%%%%%%%%%%%%%%%%%%%%%%%%%%%%%%%%%%%%%%%%%%%%%%%%%%%%

\section{Corollaries and examples}
The first aim of this final section is to prove Corollary \ref{cor}, which is a rather immediate consequence of the equality \eqref{Smale}. In order to derive \eqref{Smale} from the proof of Theorem \ref{theorem}, we note at first that the Morse index $\mu_-$ of \eqref{bvpIII} is given by the Morse index $\mu_-(L_1)$ of the operator $L_1\in\mathcal{FS}_+(H^1_0(B))$. Moreover, since $L_0$ is positive by \eqref{positive}, we see that $\mu_-(L_0)=0$. We have obtained in \eqref{crossingformIII} that all crossings $r_0$ of $L$ are regular and $\sgn\Gamma(L,r_0)=-m(r_0)$. Consequently, Lemma \ref{lemma-isolated} and Proposition \ref{prop} show that $m(r)=\dim\ker L_r=0$ for all but finitely many $r\in(0,1]$ and

\[\mu_-=\mu_-(L_1)-\mu_-(L_0)=-\sum_{0<r<1}{\sgn\Gamma(L,r)}=\sum_{0<r<1}{m(r)}\]
which is \eqref{Smale}.\\
Let us point out that \eqref{Smale} was proven by Smale in \cite{Smale} by studying the monotonicity of eigenvalues under shrinking of domains. Hence we have obtained a new proof of Smale's theorem for the boundary value problems \eqref{bvpII}, and moreover, Corollary \ref{cor} is now an immediate consequence of \eqref{Smale} and Theorem \ref{theorem}.\\
Let us now conclude this final section by some examples and remarks. For applying Corollary \ref{cor}, it is necessary to have an upper bound on the kernel dimensions $m(r)$, $r\in(0,1]$, which may be difficult to determine in general. A particular example is given by semilinear ordinary differential equations, where \eqref{bvpresc} and \eqref{bvpII} reduce to

\begin{equation}\label{ODEI}
\left\{
\begin{aligned}
-(a(x)u'(x))'+g(x,u(x))&=0,\quad x\in[0,r]\\
u(0)=u(r)&=0,
\end{aligned}
\right.
\end{equation}
and

\begin{equation}\label{ODEII}
\left\{
\begin{aligned}
-(a(x)u'(x))'+f(x)u(x)&=0,\quad x\in[0,r]\\
u(0)=u(r)&=0.
\end{aligned}
\right.
\end{equation}  
Here $a:[0,1]\rightarrow\mathbb{R}$ is positive and smooth, $g:[0,1]\times\mathbb{R}\rightarrow\mathbb{R}$ is any $C^1$ function such that $g(x,0)=0$, $x\in[0,1]$, and $f(x)=\frac{\partial g}{\partial\xi}(x,0)$ is assumed to be smooth on $[0,1]$. Since clearly $0\leq m(r)\leq 1$, we not only conclude that every conjugate instant of \eqref{ODEII} is a bifurcation instant of \eqref{ODEI}, but we also obtain from Corollary \ref{cor} that there are exactly $\mu_-$ distinct bifurcation instants, where $\mu_-$ is the number of negative eigenvalues of \eqref{ODEII} for $r=1$, i.e. the number of $\lambda<0$ such that 

\begin{equation*}
\left\{
\begin{aligned}
-(a(x)u'(x))'+f(x)u(x)&=\lambda u(x),\quad x\in[0,1]\\
u(0)=u(1)&=0
\end{aligned}
\right.
\end{equation*}
has a non-trivial solution.\\
Let us mention in passing that the computation of the crossing forms of $L$ in the proof of Theorem \ref{theorem} not only simplifies in the case of ordinary differential equations, but also yields a new proof of the Morse index theorem for geodesics in Riemannian manifolds, which can be found in the recent joint work \cite{MorseYet} of the author with A. Portaluri.\\
In higher dimensions, many semilinear equations come from problems in geometric analysis. Let us refer to \cite{Aubin}, \cite{Besse} and just mention as an example on compact Riemannian manifolds $(M,g)$ of dimension $n\geq 3$ the equation

\begin{align}\label{equationcurvature}
4\,\frac{n-1}{n-2}\,\Delta u(p)+s(p)u(p)=\mu\,|u(p)|^\frac{n+2}{n-2},\quad p\in M,
\end{align}
where $\Delta$ denotes the Laplace-Beltrami operator, $s:M\rightarrow\mathbb{R}$ the scalar curvature function and $\mu$ the Yamabe invariant of the metric $g$ on $M$. The significance of \eqref{equationcurvature} is that if $u\in C^\infty(M)$ is a positive solution, then $\tilde{g}=u^\frac{4}{n-2}g$ is a metric of constant scalar curvature on $M$.\\
Let us now assume that $\Phi:\mathcal{U}\rightarrow\mathbb{R}^n$ is a chart of $M$ and consider $\Omega:=\Phi^{-1}(B(0,1))$. In this case the boundary value problems \eqref{bvpresc} are

\begin{equation}\label{scalcurv}
\left\{
\begin{aligned}
4\,\frac{n-1}{n-2}\,\Delta u(p)+s(p)u(p)&=\mu\,|u(p)|^\frac{n+2}{n-2},\quad p\in \Omega_r\\
u(p)&=0,\quad p\in\partial\Omega_r
\end{aligned}
\right.
\end{equation}
and we obtain as corresponding linear equations \eqref{bvpII}

\begin{equation}\label{scalcurvlin}
\left\{
\begin{aligned}
4\,\frac{n-1}{n-2}\,\Delta u(p)+s(p)u(p)&=0,\quad p\in \Omega_r\\
u(p)&=0,\quad p\in\partial \Omega_r.
\end{aligned}
\right.
\end{equation}
We see that the bifurcation instants do not depend on the Yamabe invariant $\mu$. Moreover, if $g$ is already of constant scalar curvature, then the bifurcation instants are entirely determined by the spectrum of the Laplace-Beltrami operator. In particular, there is no bifurcation for \eqref{scalcurv} on manifolds of constant negative scalar curvature.

\thebibliography{99999999}
\bibitem[AS69]{AtiyahSinger} M.F. Atiyah, I.M. Singer, \textbf{Index Theory for Skew-Adjoint Fredholm Operators}, Inst. Hautes Etudes Sci. Publ. Math. \textbf{37}, 1969, 5--26.

\bibitem[APS76]{AtiyahPatodi} M.F. Atiyah, V.K. Patodi, I.M. Singer, \textbf{Spectral Asymmetry and Riemannian Geometry III}, Proc. Cambridge Philos. Soc. \textbf{79}, 1976, 71--99.

\bibitem[AP93]{Ambrosetti} A. Ambrosetti, G. Prodi, \textbf{A Primer of Nonlinear Analysis}, Cambridge studies in advanced mathematics \textbf{34}, Cambridge University Press, 1993.

\bibitem[Au82]{Aubin} T. Aubin, \textbf{Nonlinear Analysis on Manifolds. Monge-Amp\`ere Equations}, Grundlehren der mathematischen Wissenschaften \textbf{252}, Springer-Verlag, 1988.

\bibitem[Be87]{Besse} A.L. Besse, \textbf{Einstein Manifolds}, Ergebnisse der Mathematik und ihrer Grenzgebiete, Springer, 1987.

\bibitem[Ca58]{Calderon} A.P. Calderon, \textbf{Uniqueness in the Cauchy problem for partial differential equations}, Amer. J. Math.  \textbf{80}, 1958, 16--36.

\bibitem[FPR99]{SFLPejsachowicz} P.M. Fitzpatrick, J. Pejsachowicz, L. Recht, 
\textbf{Spectral Flow and Bifurcation of Critical Points of Strongly-Indefinite
Functionals Part I: General Theory},
 J. Funct. Anal. \textbf{162}, 1999, 52--95. 
 
\bibitem[GGK90]{Gohberg} I. Gohberg, S. Goldberg, M.A. Kaashoek, \textbf{Classes of linear operators. Vol. I},
Operator Theory: Advances and Applications \textbf{49}, Birkhäuser Verlag, Basel,  1990.

\bibitem[GT01]{Gilbarg} D. Gilbarg, N.S. Trudinger, \textbf{Elliptic partial differential equations of second order},
Reprint of the 1998 edition, Classics in Mathematics, Springer-Verlag, Berlin,  2001.

\bibitem[Ho69]{Hormander} L. Hörmander, \textbf{Linear partial differential operators}, Third revised printing, Die Grundlehren der mathematischen Wissenschaften, Band 116, Springer-Verlag New York Inc., New York, 1969.

\bibitem[Ka76]{Kato} T. Kato, \textbf{Perturbation theory for linear operators}, Second edition, Grundlehren der Mathematischen Wissenschaften \textbf{132}, Springer-Verlag, Berlin-New York, 1976.
 
\bibitem[Ki12]{Kielhoeffer} H. Kielhöfer, \textbf{Bifurcation theory. An introduction with applications to partial differential  equations}, Applied Mathematical Sciences \textbf{156}, Springer, New York,  2012. 

\bibitem[MW89]{Mawhin} J. Mawhin, M. Willem, \textbf{Critical point theory and Hamiltonian systems}, Applied Mathematical Sciences \textbf{74}, Springer-Verlag, New York,  1989.

\bibitem[PeW13]{JacoboIch} J. Pejsachowicz, N. Waterstraat, \textbf{Bifurcation of critical points for continuous families of $C^2$ functionals of Fredholm type}, J. Fixed Point Theory Appl. \textbf{13},  2013, 537--560, arXiv:1307.1043 [math.FA].

\bibitem[PoW13]{AleIchDomain} A. Portaluri, N. Waterstraat, \textbf{On bifurcation for semilinear elliptic Dirichlet problems and the Morse-Smale index theorem}, J. Math. Anal. Appl. \textbf{408}, 2013, 572--575, arXiv:1301.1458 [math.AP].

\bibitem[PoW14a]{AleIchBall} A. Portaluri, N. Waterstraat, \textbf{On bifurcation for semilinear elliptic Dirichlet problems on geodesic balls}, J. Math. Anal. Appl. \textbf{415}, 2014, 240--246, arXiv:1305.3078 [math.AP].

\bibitem[PoW14b]{MorseYet} A. Portaluri, N. Waterstraat, \textbf{Yet another proof of the Morse index theorem}, submitted for publication,	arXiv:1312.5291 [math.DG].

\bibitem[Ra89]{Rabier} P. Rabier, \textbf{Generalized {J}ordan chains and two bifurcation theorems of {K}rasnoselski\u\i}, Nonlinear Anal. \textbf{13}, 1989, 903--934.
 
\bibitem[Ra86]{Rabinowitz} P.H. Rabinowitz, \textbf{Minimax methods in critical point theory with applications to
 differential equations}, CBMS Regional Conference Series in Mathematics \textbf{65}, Published for the Conference Board of the Mathematical Sciences, Washington, DC; by the American Mathematical Society, Providence, RI,  1986.
 
\bibitem[RS95]{Robbin} J. Robbin, D. Salamon, \textbf{The spectral flow and the {M}aslov index}, Bull. London Math. Soc. {\bf 27}, 1995, 1--33.
 
\bibitem[Sm65]{Smale} S. Smale, \textbf{On the {M}orse index theorem}, J. Math. Mech. \textbf{14}, 1965, 1049--1055.

\bibitem[Sm67]{SmaleCorr} S. Smale, \textbf{Corrigendum: ``{O}n the {M}orse index theorem''}, J. Math. Mech. \textbf{16}, 1967, 1069--1070.

\newpage

\vspace{1cm}
Nils Waterstraat\\
Institut f\"ur Mathematik\\
Humboldt-Universit\"at zu Berlin\\
Unter den Linden 6\\
10099 Berlin\\
Germany\\
E-mail: waterstn@math.hu-berlin.de

\end{document}